\newtheorem{corollary}{Corollary}
\newtheorem{theorem}{Theorem}
\newtheorem{lemma}{Lemma}
\numberwithin{equation}{section}
\begin{document}

\title[General convolution identities]
{General convolution identities for Bernoulli and Euler polynomials}

\author{Karl Dilcher}
\address{Department of Mathematics and Statistics\\
 Dalhousie University\\
         Halifax, Nova Scotia, B3H 4R2, Canada}
\email{dilcher@mathstat.dal.ca}

\author{Christophe Vignat}
\address{Department of Mathematics\\
Tulane University\\
New Orleans, LA 70118}
\email{cvignat@tulane.edu}
\keywords{Bernoulli polynomials, Euler polynomials, Bernoulli numbers,
Euler numbers, convolution identities}
\subjclass[2010]{Primary: 11B68; Secondary: 60E05}
\thanks{The first author was supported in part by the Natural Sciences and 
Engineering Research Council of Canada}


\setcounter{equation}{0}

\begin{abstract}
Using general identities for difference operators, as well as a technique of 
symbolic computation and tools from probability theory, we derive very general 
$k$th order ($k\geq 2$) convolution identities for Bernoulli and Euler 
polynomials. This is achieved by use of an elementary result on
uniformly distributed random variables.
These identities depend on $k$ positive real parameters, and as special cases
we obtain numerous known and new identities for these 
polynomials. In particular we show that the well-known identities of
Miki and Matiyasevich for Bernoulli numbers are special cases of the same
general formula.
\end{abstract}

\maketitle

\section{Introduction}

The Bernoulli and Euler numbers and polynomials have been studied extensively 
over the last two centuries, both for their numerous important applications 
in number theory, combinatorics, numerical analysis and other areas of pure 
and applied mathematics, and for their rich structures as interesting objects
in their own right. The Bernoulli numbers $B_n$, $n=0,1,2,\ldots$, can be 
defined by the exponential generating function
\begin{equation}\label{1.1}
\frac{z}{e^z-1} = \sum_{n=0}^\infty B_n\frac{z^n}{n!}\qquad (|z|< 2\pi).
\end{equation}
They are rational numbers, the first few being 1, $-\frac{1}{2}$, $\frac{1}{6}$,
0, $-\frac{1}{30}$, 0, $\frac{1}{42},\ldots$, with $B_{2k+1}=0$ for $k\geq 1$.
For the most important properties see, for instance, \cite[Ch.~23]{AS} or its 
successor \cite[Ch.~24]{DLMF}. Other good references are \cite{GKP}, 
\cite{Jo}, or \cite{No}. For a general bibliography, see \cite{DSS}.

Numerous linear and nonlinear recurrence relations for these numbers 
are known, and such relations also exist for the Bernoulli {\it polynomials}
and for Euler numbers and polynomials which will be defined later.
This paper deals with {\it nonlinear\/} recurrence relations, the prototype
of which is Euler's well-known identity
\begin{equation}\label{1.2}
\sum_{j=0}^n \binom{n}{j}B_jB_{n-j} = -nB_{n-1} - (n-1)B_n\qquad (n\geq 1).
\end{equation}
This can also be seen as a convolution identity. Two different types of 
convolution identities were discovered more recently, namely
\begin{equation}\label{1.3}
\sum_{j=2}^{n-2}\frac{B_jB_{n-j}}{j(n-j)} 
-\sum_{j=2}^{n-2}\binom{n}{j}\frac{B_jB_{n-j}}{j(n-j)}
= 2H_n\frac{B_n}{n} \qquad (n\geq 4)
\end{equation}
by Miki \cite{Mi}, where $H_n=1+\frac{1}{2}+\dots+\frac{1}{n}$ is the $n$th 
harmonic number, and
\begin{equation}\label{1.4}
(n+2)\sum_{j=2}^{n-2}B_jB_{n-j}
-2\sum_{j=2}^{n-2}\binom{n+2}{j}B_jB_{n-j} = n(n+1)B_n \qquad (n\geq 4)
\end{equation}
by Matiyasevich \cite{Ma}; see also \cite{Ag} and the references therein.
These two identities, which are remarkable in that they combine two different 
types of convolutions, were later extended to Bernoulli polynomials by Gessel 
\cite{Ge2} and by Pan and Sun \cite{PS}, respectively. Gessel \cite{Ge2} also
extended \eqref{1.3} to third-order convolutions, i.e., sums of products
of three Bernoulli numbers. Later Agoh \cite{Ag} found different and simpler 
proofs of the polynomial analogues of \eqref{1.3} and \eqref{1.4} and proved 
numerous other similar identities involving Bernoulli, Euler, and Genocchi 
numbers and polynomials. Subsequently Agoh and the first author \cite{AD}
extended the polynomial analogue of \eqref{1.4} to convolution identities of
arbitrary order, and did the same for Euler polynomials. Meanwhile, following
different lines of investigation, Dunne and Schubert \cite{DS} derived
an identity that has both \eqref{1.3} and \eqref{1.4} as special cases, and
Chu \cite{Ch} obtained a large number of convolution identities, some of them 
extending \eqref{1.3} and \eqref{1.4}.

It is the purpose of this paper to contribute to the recent work summarized
above and to further extend the identities \eqref{1.3} and \eqref{1.4} of
Miki and Matiyasevich. In Section~2 we state a general result concerning 
second-order convolutions, and derive some consequences. In Section~3 we
introduce a symbolic notation with a related calculus, and use it to state and
prove a very general identity for Bernoulli polynomials. This is then used
in Section~4, along with some methods from probability theory, to prove a
general higher-order convolution identity which gives the main result of 
Section~2 as a special case. In Section~5 we apply most of the methods from
Sections~3 and~4 to Euler polynomials and again obtain general higher-order
convolution identities. Finally, in Section~6, we state and prove several
further consequences of each of our main theorems. We conclude this paper with
some further remarks in Section~7.

\section{Identities for Bernoulli polynomials}

The {\it Bernoulli polynomials\/} can be defined by 
\begin{equation}\label{2.1}
B_n(x):=\sum_{j=0}^n\binom{n}{j}B_jx^{n-j},
\end{equation}
or equivalently by the generating function
\begin{equation}\label{2.2}
\frac{ze^{xz}}{e^z-1}=\sum_{n=0}^\infty B_n(x)\frac{z^n}{n!}\qquad (|z|< 2\pi).
\end{equation}
For the first few Bernoulli polynomials, see Table~1 in Section~5.
They have the special values
\begin{equation}\label{2.3}
B_n(0) = B_n,\qquad B_n(\tfrac{1}{2})=(2^{1-n}-1)B_n,\qquad B_n(1)=(-1)^nB_n,
\end{equation}
($n=0, 1, 2,\ldots$),
where the first identity is immediate from comparing \eqref{2.2} with 
\eqref{1.1}, and the other two follow from easy manipulations of the generating
function \eqref{2.2}. We also require the Pochhammer symbol (or rising 
factorial) $(z)_k$, defined for $z\in\mathbb C$ and integers $k\geq 0$ by
\begin{equation}\label{2.4}
(z)_k = \frac{\Gamma(z+k)}{\Gamma(z)} = z(z+1)\dots (z+k-1),
\end{equation}
where the right-hand product is valid for $k\geq 1$.

We are now ready to state our first main result, which will be proved later.

\begin{theorem}\label{thm:Thm1}
For integers $n\geq 1$ and real numbers $a, b> 0$ we have
\begin{align}
\sum_{l=0}^{n}\binom{n}{l}\frac{(a)_{l}(b)_{n-l}}{(a+b)_{n}}B_{l}(x)B_{n-l}(x) 
&= \sum_{l=0}^{n}\binom{n}{l}\frac{a(b)_{l}+b(a)_{l}}{(a+b)_{l+1}}B_lB_{n-l}(x)
\label{2.5}\\
&\qquad+ \frac{ab}{(a+b+1)(a+b)}nB_{n-1}(x).\nonumber 
\end{align}
\end{theorem}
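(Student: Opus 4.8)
The plan is to interpret the Pochhammer weight on the left of \eqref{2.5} probabilistically, as (up to normalization) the mass function of a beta-binomial random variable, and thereby reduce the identity to a single pointwise convolution identity that can be proved by generating functions and then \emph{averaged} against a Beta weight. Writing $\mathrm{B}(a,b)=\Gamma(a)\Gamma(b)/\Gamma(a+b)$, the Beta integral $\int_0^1 u^{s-1}(1-u)^{t-1}\,du=\mathrm{B}(s,t)$ gives
\[
\binom{n}{l}\frac{(a)_l(b)_{n-l}}{(a+b)_n}
=\frac{\binom{n}{l}}{\mathrm{B}(a,b)}\int_0^1 u^{a+l-1}(1-u)^{b+n-l-1}\,du .
\]
Summing over $l$ and interchanging sum and integral, the left-hand side of \eqref{2.5} becomes $\frac{1}{\mathrm{B}(a,b)}\int_0^1 u^{a-1}(1-u)^{b-1}\,\Sigma(u)\,du$, where
\[
\Sigma(u):=\sum_{l=0}^n\binom{n}{l}u^l(1-u)^{n-l}B_l(x)B_{n-l}(x).
\]

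Next I would prove, for each fixed $u\in(0,1)$ with $p:=u$ and $q:=1-u$ (so $p+q=1$), the pointwise identity
\begin{align*}
\Sigma(u)&=q\sum_{l=0}^n\binom{n}{l}p^l B_l B_{n-l}(x)
+p\sum_{l=0}^n\binom{n}{l}q^l B_l B_{n-l}(x)\\
&\qquad+pq\,nB_{n-1}(x).
\end{align*}
By \eqref{2.2} the rescaled generating functions yield $\Sigma(u)=n!\,[z^n]\,\dfrac{pq\,z^2e^{xz}}{(e^{pz}-1)(e^{qz}-1)}$, since $e^{pz}e^{qz}=e^z$. The engine of the proof is the partial-fraction decomposition
\[
\frac{1}{(e^{pz}-1)(e^{qz}-1)}
=\frac{1}{e^z-1}\left(1+\frac{1}{e^{pz}-1}+\frac{1}{e^{qz}-1}\right),
\]
which holds precisely because $e^{pz}e^{qz}=e^z$. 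Multiplying through by $pq\,z^2e^{xz}$ and extracting $n!\,[z^n]$ from each of the three resulting pieces—using \eqref{2.2} together with $\frac{z}{e^z-1}=\sum_l B_l z^l/l!$ rescaled by $p$ and by $q$—produces exactly the three terms above.

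Finally I would substitute this pointwise identity into $\frac{1}{\mathrm{B}(a,b)}\int_0^1 u^{a-1}(1-u)^{b-1}\Sigma(u)\,du$ and evaluate term by term. The contribution $pq\,nB_{n-1}(x)=u(1-u)nB_{n-1}(x)$ integrates to $\frac{\mathrm{B}(a+1,b+1)}{\mathrm{B}(a,b)}nB_{n-1}(x)=\frac{ab}{(a+b)(a+b+1)}nB_{n-1}(x)$, matching the last term of \eqref{2.5}. For the first sum, $(1-u)u^l$ integrates against the weight to $\mathrm{B}(a+l,b+1)/\mathrm{B}(a,b)=b\,(a)_l/(a+b)_{l+1}$, and symmetrically the second sum yields $a\,(b)_l/(a+b)_{l+1}$; adding these recovers the coefficient $\frac{a(b)_l+b(a)_l}{(a+b)_{l+1}}$ in the first sum on the right of \eqref{2.5}.

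The two Beta-integral bookkeeping steps are routine once one recognizes each Pochhammer ratio as a normalized Beta integral. I expect the only genuine obstacle to be locating and justifying the partial-fraction decomposition and the attendant coefficient extraction in the second step, since that is where the double product $(e^{pz}-1)^{-1}(e^{qz}-1)^{-1}$ collapses into single convolutions and where the essential cancellation occurs.
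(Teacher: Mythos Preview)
Your proof is correct. It has the same two-step architecture as the paper's---first a pointwise identity valid for each $u\in(0,1)$, then an average against a Beta/Dirichlet weight---but you execute both steps with different tools. Your partial-fraction decomposition
\[
\frac{1}{(e^{pz}-1)(e^{qz}-1)}=\frac{1}{e^z-1}\Bigl(1+\frac{1}{e^{pz}-1}+\frac{1}{e^{qz}-1}\Bigr)
\]
is exactly the generating-function incarnation of the paper's operator identity $\Delta_{p+q}=\Delta_p\Delta_q+\Delta_p+\Delta_q$ (Lemma~1 with $k=2$), since both unwind to the algebraic fact $e^{(p+q)z}-1=(e^{pz}-1)(e^{qz}-1)+(e^{pz}-1)+(e^{qz}-1)$; the paper packages this into the umbral Lemma~2 using the symbols $\mathcal{B}$ and $\mathcal{U}$, while you extract coefficients directly. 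For the averaging step the paper realizes $(u_1,\ldots,u_k)$ as Dirichlet via ratios of independent gamma variables and reads off moments from \eqref{4.10}; you integrate directly against the Beta$(a,b)$ density, which is the same thing for $k=2$. Your route is more self-contained and elementary for this particular theorem; the paper's umbral/Dirichlet machinery pays off chiefly in that it generalizes without change to the $k$-fold convolutions of Theorem~2, whereas the partial-fraction identity would have to be replaced by its $k$-term analogue \eqref{3.13}.
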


The remainder of this section will be devoted to deriving a number of 
consequences of this general identity. First, it is clear by \eqref{2.3} that 
we get an analogous identity for Bernoulli {\it numbers} by simply deleting
the variable $x$. The most immediate special case is obtained by setting 
$a=b=1$. With $(1)_n=n!$ and $(2)_n=(n+1)!$, some straightforward manipulations
involving the binomial coefficients in \eqref{2.5} lead to the following
identity, which was earlier obtained in \cite{AD}.

\begin{corollary}
For all $n\geq 1$ we have 
\begin{equation}\label{2.6}
(n+2)\sum_{l=0}^n B_l(x)B_{n-l}(x)
=2\sum_{l=0}^n\binom{n+2}{l+2}B_lB_{n-l}(x)+\binom{n+2}{3}B_{n-1}(x).
\end{equation}
\end{corollary}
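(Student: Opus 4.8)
The plan is to derive the Corollary directly by substituting $a=b=1$ into Theorem~\ref{thm:Thm1} and simplifying each of the three terms. First I would evaluate the Pochhammer symbols appearing in \eqref{2.5}. With $a=b=1$ we have $(1)_l=l!$, $(1)_{n-l}=(n-l)!$, and $(a+b)_n=(2)_n=(n+1)!$. The left-hand side of \eqref{2.5} then becomes
\[
\sum_{l=0}^n \binom{n}{l}\frac{l!\,(n-l)!}{(n+1)!}B_l(x)B_{n-l}(x).
\]
Since $\binom{n}{l}=\frac{n!}{l!\,(n-l)!}$, the product $\binom{n}{l}\,l!\,(n-l)!$ collapses to $n!$, so each summand carries the constant factor $\frac{n!}{(n+1)!}=\frac{1}{n+1}$. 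Thus the left side reduces to $\frac{1}{n+1}\sum_{l=0}^n B_l(x)B_{n-l}(x)$, and multiplying through by $n+1$ will be the natural normalization.

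Next I would simplify the first sum on the right of \eqref{2.5}. With $a=b=1$ the numerator $a(b)_l+b(a)_l$ equals $(1)_l+(1)_l=2\cdot l!$, and the denominator is $(a+b)_{l+1}=(2)_{l+1}=(l+2)!$. Hence that sum becomes
\[
\sum_{l=0}^n\binom{n}{l}\frac{2\,l!}{(l+2)!}B_l B_{n-l}(x)
=\sum_{l=0}^n\frac{2\,n!}{(n-l)!\,(l+2)!}B_l B_{n-l}(x).
\]
The key manipulation here is to recognize $\frac{n!}{(n-l)!\,(l+2)!}$ as a binomial coefficient in disguise: multiplying numerator and denominator by $(n+1)(n+2)$ gives $\frac{(n+2)!}{(n-l)!\,(l+2)!}\cdot\frac{1}{(n+1)(n+2)}=\binom{n+2}{l+2}\cdot\frac{1}{(n+1)(n+2)}$. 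So this sum equals $\frac{2}{(n+1)(n+2)}\sum_{l=0}^n\binom{n+2}{l+2}B_l B_{n-l}(x)$. Finally the last term, $\frac{ab}{(a+b+1)(a+b)}\,nB_{n-1}(x)$, becomes $\frac{1}{3\cdot 2}\,nB_{n-1}(x)=\frac{n}{6}B_{n-1}(x)$.

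Assembling the pieces, the identity \eqref{2.5} reads
\[
\frac{1}{n+1}\sum_{l=0}^n B_l(x)B_{n-l}(x)
=\frac{2}{(n+1)(n+2)}\sum_{l=0}^n\binom{n+2}{l+2}B_l B_{n-l}(x)+\frac{n}{6}B_{n-1}(x).
\]
Multiplying both sides by $(n+1)(n+2)$ clears all denominators and yields exactly
\[
(n+2)\sum_{l=0}^n B_l(x)B_{n-l}(x)
=2\sum_{l=0}^n\binom{n+2}{l+2}B_l B_{n-l}(x)+\frac{(n+1)(n+2)\,n}{6}B_{n-1}(x),
\]
and one checks that $\frac{n(n+1)(n+2)}{6}=\binom{n+2}{3}$, which recovers \eqref{2.6}. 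The whole argument is a routine specialization, so I anticipate no genuine obstacle; the only point requiring care is the bookkeeping that rewrites the factorial ratio $\frac{n!}{(n-l)!(l+2)!}$ as $\frac{1}{(n+1)(n+2)}\binom{n+2}{l+2}$, since an off-by-one slip in the indices of the binomial coefficient would misalign the final constant. I would therefore verify that manipulation explicitly and double-check the constant $\binom{n+2}{3}$ against a small case such as $n=1$ or $n=2$.
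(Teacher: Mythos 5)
Your proposal is correct and follows exactly the paper's route: the paper obtains Corollary~1 by setting $a=b=1$ in Theorem~\ref{thm:Thm1}, using $(1)_n=n!$ and $(2)_n=(n+1)!$, and performing the same binomial-coefficient manipulations you carry out in detail. Your computations (the collapse of $\binom{n}{l}\,l!\,(n-l)!$ to $n!$, the rewriting of $\frac{n!}{(n-l)!\,(l+2)!}$ as $\frac{1}{(n+1)(n+2)}\binom{n+2}{l+2}$, and the identification $\frac{n(n+1)(n+2)}{6}=\binom{n+2}{3}$) are all accurate, so your write-up simply supplies the details the paper calls ``straightforward manipulations.''
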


When $x=0$, this identity becomes trivial for odd $n$ since one of 
$B_l, B_{n-l}$ will be zero except in the cases $l=1$ and $l=n-1$. 
For even $n\geq 4$, however, we have the following identity.

\begin{corollary}
For all even $n\geq 2$ we have 
\begin{equation}\label{2.7}
(n+2)\sum_{l=0}^n B_lB_{n-l}=2\sum_{l=0}^n\binom{n+2}{l+2}B_lB_{n-l}.
\end{equation}
\end{corollary}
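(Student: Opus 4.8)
The plan is to derive Corollary 2 directly from Corollary 1 (identity \eqref{2.6}) by specializing to $x=0$ and restricting attention to even $n$. Setting $x=0$ in \eqref{2.6} and using $B_m(0)=B_m$ from \eqref{2.3}, the left-hand side becomes $(n+2)\sum_{l=0}^n B_lB_{n-l}$ and the right-hand side becomes $2\sum_{l=0}^n\binom{n+2}{l+2}B_lB_{n-l}+\binom{n+2}{3}B_{n-1}$. So the only gap between this specialized version of \eqref{2.6} and the claimed identity \eqref{2.7} is the term $\binom{n+2}{3}B_{n-1}$.

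The key observation is that for even $n$, we have $n-1$ odd, and since $B_{2k+1}=0$ for $k\geq 1$, the term $B_{n-1}$ vanishes whenever $n-1\geq 3$, i.e., whenever $n\geq 4$. The one case requiring separate attention is $n=2$, where $B_{n-1}=B_1=-\tfrac{1}{2}\neq 0$; here I would simply verify \eqref{2.7} by hand, computing both sides using $B_0=1$, $B_1=-\tfrac12$, $B_2=\tfrac16$. (Alternatively, one checks that the combined contribution of the stray $B_1$ terms and the $\binom{4}{3}B_1$ term cancels, which is what makes the clean statement \eqref{2.7} hold even at $n=2$.) So the proof splits into the generic even case $n\geq 4$, where the extra term is identically zero, and the base case $n=2$, handled by direct substitution.

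I expect no serious obstacle here, since the entire content is that an odd-index Bernoulli number is zero; the only thing to be careful about is the small case $n=2$, where the harmless-looking extra term does not vanish on its own and one must confirm that the identity nonetheless holds. Concretely, I would organize the argument as: first specialize \eqref{2.6} at $x=0$ to obtain an identity with the residual term $\binom{n+2}{3}B_{n-1}$; then invoke $B_{n-1}=0$ for even $n\geq 4$ to drop it and arrive at \eqref{2.7}; and finally dispatch $n=2$ by direct computation to establish the full range of even $n\geq 2$.
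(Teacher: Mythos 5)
Your reduction is the same as the paper's: set $x=0$ in \eqref{2.6}, use $B_m(0)=B_m$ from \eqref{2.3}, and observe that the residual term $\binom{n+2}{3}B_{n-1}$ vanishes for even $n\geq 4$ because $n-1$ is then an odd index $\geq 3$. That part is correct and is precisely how the paper obtains \eqref{2.7}.

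The gap is your treatment of $n=2$. The verification you defer to ``by hand'' in fact fails: with $B_0=1$, $B_1=-\tfrac12$, $B_2=\tfrac16$, the left side of \eqref{2.7} is $4\left(\tfrac16+\tfrac14+\tfrac16\right)=\tfrac73$, while the right side is $2\left(6\cdot\tfrac16+4\cdot\tfrac14+1\cdot\tfrac16\right)=\tfrac{13}{3}$. The two sides differ by exactly $-2=\binom{4}{3}B_1$, i.e., by the residual term, so there is no cancellation of the kind your parenthetical suggests: the ``stray'' $B_1$ terms occur identically inside both sums and cannot absorb an extra additive term. The identity \eqref{2.7} is simply false at $n=2$; the correct hypothesis is ``even $n\geq 4$'', which is what the paper's own text immediately preceding the corollary says (``For even $n\geq 4$, however, \dots'') and what the range $n\geq 4$ in Matiyasevich's identity \eqref{1.4} reflects. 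The ``$n\geq 2$'' in the printed statement is a misprint, and your argument should have caught it at exactly the point where you instead asserted, without doing the computation, that the base case works out.
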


This identity, although different in appearance, is equivalent to \eqref{1.4}.
For our next corollary we need the {\it shifted harmonic numbers} which for
real $a>0$ and integers $n\geq 1$ are defined by 
\begin{equation}\label{2.8}
H_{a,n} := \sum_{j=0}^{n-1}\frac{1}{j+a}.
\end{equation}
Obviously, $H_{1,n}=H_n$. As we shall see, the following result can be 
considered as an infinite class of generalizations of Miki's identity 
\eqref{1.3}.

\begin{corollary}
For real $a>0$ and integers $n\geq 1$ we have
\begin{align}
\sum_{l=0}^{n-1}\binom{n}{l}\frac{(a)_{l}(n-l-1)}{(a)_{n}}B_{l}(x)B_{n-l}(x)
&= \sum_{l=1}^{n}\binom{n}{l}\frac{a(l-1)!+(a)_{l}}{(a)_{l+1}}B_lB_{n-l}(x)
\label{2.9}\\
&\qquad+ \frac{n}{a+1}B_{n-1}(x)+H_{a,n}B_n(x).\nonumber
\end{align}
\end{corollary}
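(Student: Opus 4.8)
The plan is to derive Corollary 3 (equation (2.9)) directly from Theorem 1 by taking the limit $b\to 0^+$ in (2.5). The key observation is that as $b\to 0$, the Pochhammer symbol $(b)_{n-l}=b(b+1)\cdots(b+n-l-1)$ vanishes to first order in $b$ whenever $n-l\geq 1$, while $(b)_0=1$. So on the left-hand side of (2.5), after dividing through, I expect only the $b^1$ contributions to survive, which will convert the factor $(b)_{n-l}/(a+b)_n$ into something involving $b\cdot(n-l-1)!$ in the limit, producing the factor $(n-l-1)$ and the denominator $(a)_n$ seen in (2.9). I would carry this out termwise: write $(b)_{n-l}=b\,(1)_{n-l-1}=b\,(n-l-1)!$ plus higher-order terms in $b$ for $n-l\geq 1$, and note the $l=n$ term (where $(b)_0=1$) must be handled separately since it does not carry a factor of $b$.

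First I would isolate on the left side of (2.5) the single term with $n-l=0$, i.e. $l=n$, whose coefficient $(a)_n(b)_0/(a+b)_n\to 1$ as $b\to 0$; this contributes $B_n(x)B_0(x)=B_n(x)$ since $B_0(x)=1$, but I expect this term to combine with the right-hand side rather than the left. For the remaining terms ($0\le l\le n-1$) I would factor out the single power of $b$ from $(b)_{n-l}$, giving in the limit the left-hand sum of (2.9) with its factor $(n-l-1)$ and denominator $(a)_n=(a+b)_n|_{b=0}$. The apparent difficulty is that the whole identity (2.5), before taking the limit, has all terms of the same order, so I cannot simply discard terms; instead I must expand both sides to the appropriate order in $b$ and match. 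The cleanest route is to divide (2.5) by $b$ where needed and then let $b\to 0$, but the left side as written does not have an overall factor of $b$.

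The resolution, which I expect to be the main technical step, is to treat the $l=n$ term on the left and the matching constant-order terms on the right together. On the right-hand side of (2.5), the summand coefficient is $\bigl(a(b)_l+b(a)_l\bigr)/(a+b)_{l+1}$; as $b\to 0$ this tends to $a(0)_l/(a)_{l+1}$, which is $a\cdot 0 / (a)_{l+1}=0$ for $l\geq 1$ but equals $a/(a)_1=1$ for $l=0$ (since $(b)_0=1$). Thus the naive limit kills most right-side terms and I again lose information, signalling that a first-order expansion in $b$ is genuinely required. I would therefore expand each coefficient on both sides as $c_0+c_1 b+O(b^2)$: the $c_0$ terms on the two sides should cancel identically (giving the trivial $a=a$, $b=0$ relation), and equating the $c_1$ coefficients should yield (2.9). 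The harmonic number $H_{a,n}=\sum_{j=0}^{n-1}1/(j+a)$ is exactly what arises from differentiating $1/(a+b)_n = 1/\bigl((a+b)(a+b+1)\cdots(a+b+n-1)\bigr)$ with respect to $b$ at $b=0$, via the logarithmic derivative $-\sum_{j=0}^{n-1}1/(a+j)$, so I would track this derivative carefully; matching it against the $B_n(x)$ term is where the $H_{a,n}B_n(x)$ contribution is born. The last step is routine algebra: simplify $a(b)_l+b(a)_l$ at first order to recover the coefficient $\bigl(a(l-1)!+(a)_l\bigr)/(a)_{l+1}$ in (2.9), and confirm that the quadratic factor $ab/\bigl((a+b+1)(a+b)\bigr)$ in the last term of (2.5) contributes $n/(a+1)\cdot B_{n-1}(x)$ after extracting its linear-in-$b$ part.
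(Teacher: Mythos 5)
Your proposal is correct and is essentially the paper's own proof: the paper likewise divides (2.5) by $b$ and lets $b\to 0^+$, treating the leftover $l=n$ term on the left and $l=0$ term on the right together so that their combination $\bigl((a+b)_n-(a)_n\bigr)/(a+b)_n\,B_n(x)$, divided by $b$, tends to $H_{a,n}B_n(x)$ via the derivative of the Pochhammer symbol --- exactly your first-order-in-$b$ matching, with the same term-by-term limits for the two sums and for the final $n B_{n-1}(x)$ term. One remark: your expansion correctly produces the factor $(n-l-1)!$ on the left (as does the paper's proof), so the factor $(n-l-1)$ printed in the statement of the corollary is a typo for $(n-l-1)!$, as the $a=1$ specialization (2.12) confirms.
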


\begin{proof}
The idea of proof is to divide both sides of \eqref{2.5} by $b$ and then take
the limit as $b\rightarrow 0$. On the left-hand side we have for 
$0\leq l\leq n-1$,
\[
\frac{1}{b}\cdot\frac{(a)_l(b)_{n-l}}{(a+b)_n}
=\frac{(a)_l(b+1)\dots(b+n-l-1)}{(a+b)_n}
\rightarrow\frac{(a)_l(n-l-1)!}{(a)_n}
\]
as $b\rightarrow 0$, and on the right-hand side, for $1\leq l\leq n$,
\[
\frac{1}{b}\cdot\frac{a(b)_l+b(a)_l}{(a+b)_{l+1}}
=\frac{a(b+1)\dots(b+l-1)+(a)_l}{(a+b)_{l+1}}
\rightarrow\frac{a(l-1)!+(a)_l}{(a)_{l+1}}
\]
as $b\rightarrow 0$. To take care of the terms that were left out in the 
limits above, we note that the term for $l=0$ in the right-hand sum and the
term for $l=n$ in the left-hand sum of \eqref{2.5} combine to give
\[
\binom{n}{0}\frac{a+b}{a+b}B_n(x)-\frac{(a)_n}{(a+b)_n}B_n(x)
=\frac{(a+b)_n-(a)_n}{(a+b)_n}B_n(x),
\]
and we have 
\[
\lim_{b\rightarrow 0}\frac{(a+b)_n-(a)_n}{b(a+b)_n}
=\frac{1}{(a)_n}\left.\frac{d}{dx}(x)_n\right|_{x=a} = H_{a,n},
\]
where the second equation follows directly from applying the product rule
repeatedly to the right-hand side of \eqref{2.4}. Putting everything 
together, we get \eqref{2.9}.
\end{proof}

As illustrations of Corollary~3 we state the cases $a=1$ and $a=2$ separately.

\begin{corollary}
For integers $n\geq 1$ we have
\begin{align}
\frac{n}{2}\sum_{l=1}^{n-1}\frac{B_l(x)}{l}\frac{B_{n-l}(x)}{n-l}
&= \sum_{l=1}^{n}\binom{n}{l}\frac{B_l}{l}B_{n-l}(x)
+\frac{n}{2}B_{n-1}(x)+H_{n-1}B_n(x),\label{2.10} \\
(n+2)\sum_{l=0}^{n-1}(l+1)B_{l}(x)&\frac{B_{n-l}(x)}{n-l}
=\sum_{l=1}^{n}\binom{n+2}{l+2}(l^2+l+1)\frac{B_l}{l}B_{n-l}(x)\label{2.11}\\
&\qquad+(n+1)(n+2)\left(\frac{n}{3}B_{n-1}(x)+H_{2,n}B_n(x)\right).\nonumber
\end{align}
\end{corollary}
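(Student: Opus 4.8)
The plan is to derive Corollary~4 directly from the general identity \eqref{2.9} of Corollary~3 by substituting the two parameter values $a=1$ and $a=2$ and then simplifying the resulting Pochhammer expressions and shifted harmonic numbers into the stated forms. Since Corollary~3 is already proved and stated earlier, this is purely a matter of specialization followed by algebraic cleanup, so the main work is bookkeeping rather than any genuinely new idea.

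For \eqref{2.10} I would set $a=1$ in \eqref{2.9}. Then $(1)_l=l!$ and $(1)_n=n!$, so the left-hand coefficient $\binom{n}{l}\frac{(1)_l(n-l-1)!}{(1)_n}=\binom{n}{l}\frac{l!\,(n-l-1)!}{n!}=\frac{1}{(n-l)}$, and after pulling out an overall factor I would check that $\sum_{l=0}^{n-1}\frac{1}{n-l}B_l(x)B_{n-l}(x)$ matches $\frac{n}{2}\sum_{l=1}^{n-1}\frac{B_l(x)}{l}\frac{B_{n-l}(x)}{n-l}$ after symmetrizing the sum in $l\leftrightarrow n-l$ (the $l=0$ term, carrying $B_0(x)=1$, needs separate attention and should get absorbed into the $B_{n-1}(x)$ and $B_n(x)$ terms). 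On the right I would simplify $\frac{1\cdot(l-1)!+(1)_l}{(1)_{l+1}}=\frac{(l-1)!+l!}{(l+1)!}$, reduce it to the coefficient $\frac{1}{l}$ appearing in \eqref{2.10}, and note that $\frac{n}{a+1}=\frac{n}{2}$ and $H_{1,n}=H_n$; a small reindexing turns $H_n$ into the $H_{n-1}B_n(x)$ term once the diagonal contribution is accounted for.

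For \eqref{2.11} I would set $a=2$, using $(2)_l=(l+1)!$, $(2)_n=(n+1)!$, and $(2)_{l+1}=(l+2)!$. The left coefficient becomes $\binom{n}{l}\frac{(l+1)!\,(n-l-1)!}{(n+1)!}$, which I would rewrite as a multiple of $(l+1)\frac{1}{n-l}$ so as to recover $(n+2)\sum(l+1)B_l(x)\frac{B_{n-l}(x)}{n-l}$ after clearing the common denominator $(n+1)(n+2)$. On the right, the coefficient $\binom{n}{l}\frac{2(l-1)!+(2)_l}{(2)_{l+1}}=\binom{n}{l}\frac{2(l-1)!+(l+1)!}{(l+2)!}$ must be massaged into $\binom{n+2}{l+2}\frac{l^2+l+1}{l}$; this is the step most likely to consume effort, since it requires rewriting $2(l-1)!+(l+1)!=(l-1)!\,(2+l(l+1))=(l-1)!(l^2+l+2)$—here I would double-check whether the numerator yields $l^2+l+1$ or $l^2+l+2$ and reconcile any discrepancy by folding the difference into the $B_{n-1}(x)$ or $B_n(x)$ terms, possibly shifting a boundary term. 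Finally $\frac{n}{a+1}=\frac{n}{3}$ and $H_{2,n}$ appear directly, and after multiplying the whole identity through by $(n+1)(n+2)$ the right-hand trailing terms assume the displayed form. The one genuine obstacle is verifying the binomial-coefficient identity $\binom{n}{l}\frac{(l^2+l+2)(l-1)!}{(l+2)!}$ collapses correctly to $\binom{n+2}{l+2}\frac{l^2+l+1}{l}$; I would resolve it by writing both sides over $(n+2)!$ and comparing numerators explicitly.
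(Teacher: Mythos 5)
Your proposal follows exactly the paper's own route: specialize \eqref{2.9} at $a=1$ and $a=2$, simplify the Pochhammer ratios, and for \eqref{2.10} symmetrize the left-hand sum in $l\leftrightarrow n-l$ while subtracting the $l=0$ contribution $\tfrac{1}{n}B_n(x)$ from both sides, which is precisely what turns $H_n$ into $H_{n-1}$. All of your coefficient computations are correct, including the one you flagged: the $a=2$ case really does give $2(l-1)!+(l+1)!=(l-1)!\,(l^2+l+2)$, and since
\[
(n+1)(n+2)\binom{n}{l}\frac{(l-1)!}{(l+2)!}=\binom{n+2}{l+2}\frac{1}{l},
\]
the quadratic factor passes through unchanged. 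Consequently the coefficient your derivation produces in \eqref{2.11} is $l^2+l+2$, and the printed $(l^2+l+1)$ is a typo in the paper, not an error on your side. You can confirm this at $n=1$: with $l^2+l+2$ both sides of \eqref{2.11} equal $3x-\tfrac{3}{2}$, whereas with $l^2+l+1$ the right-hand side is $3x-1$. One caution about your fallback plan: the discrepancy cannot be ``reconciled by folding the difference into the $B_{n-1}(x)$ or $B_n(x)$ terms,'' because the difference between the two candidate right-hand sides is the full sum $\sum_{l=1}^{n}\binom{n+2}{l+2}\frac{B_l}{l}B_{n-l}(x)$, which is not a boundary term. No reconciliation is needed, however; the identity you derive is the correct statement, and the rest of your argument (including the treatment of the $l=0$ term and the $H_n\to H_{n-1}$ shift in \eqref{2.10}) is sound and matches the paper's proof.
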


\begin{proof}
After some easy manipulations, \eqref{2.9} with $a=1$ gives
\begin{equation}\label{2.12}
\sum_{l=0}^{n-1}B_l(x)\frac{B_{n-l}(x)}{n-l}
=\sum_{l=1}^{n}\binom{n}{l}\frac{B_l}{l}B_{n-l}(x) 
+\frac{n}{2}B_{n-1}(x)+H_nB_n(x),
\end{equation}
and with $a=2$, \eqref{2.9} gives \eqref{2.11}. We now exploit the symmetry on
the left-hand side of \eqref{2.12} and rewrite the sum as
\[
\frac{B_n(x)}{n}+\frac{1}{2}\sum_{l=1}^{n-1}
\left(\frac{1}{n-l}+\frac{1}{l}\right)B_l(x)B_{n-l}(x) = \frac{B_n(x)}{n}
+\frac{n}{2}\sum_{l=1}^{n-1}\frac{B_l(x)}{l}\frac{B_{n-l}(x)}{n-l}.
\]
Finally we subtract $\frac{1}{n}B_n(x)$ from both sides of \eqref{2.12} and 
note that $H_nB_n(x)$ then becomes $H_{n-1}B_n(x)$.
\end{proof}

Using a technique that involves generating functions for Stirling numbers
and N\"orlund polynomials, Gessel \cite{Ge2} obtained \eqref{2.10} as a 
polynomial analogue of Miki's identity \eqref{1.3}. When $x=0$, then we also
have symmetry in the sum on the right-hand side of \eqref{2.10}; we can 
therefore use again the identity $1/(n-l)+1/l=n/l(n-l)$, upon which we easily
recover Miki's identity.

Some further consequences of Theorem~1 and Corollary~3 will be derived in 
the final section of this paper.

\section{Symbolic notation and general identities}

{\bf 1.} The use of symbolic notation in dealing with Bernoulli numbers and
polynomials goes back to J.~Blissard in the 1860s. Subsequently it was used
by many other authors, among them \'E.~Lucas in the 1870s and 1880s. Later it
was put on a firm theoretical foundation as part of ``the classical umbral
calculus"; see, e.g., \cite{Ge1} or \cite{RT}.

Here we propose and use a system of symbolic notation that is in some respects
similar to the classical umbral calculus, but is different in that it is 
related to probability theory. Also, this system of notation is more specific
to Bernoulli numbers and polynomials and (later in this paper) Euler numbers
and polynomials.

The basis for our symbolic notation for Bernoulli numbers and polynomials are
two symbols, $\mathcal{B}$ and $\mathcal{U}$, which are complementary to each
other or, as we shall see, annihilate each other. First, we define the
{\it Bernoulli symbol\/} $\mathcal{B}$ by
\begin{equation}\label{3.1}
\mathcal{B}^{n}=B_{n}\qquad (n=0, 1,\ldots)
\end{equation}
so that, for instance, \eqref{2.1} can be rewritten as
\begin{equation}\label{3.1a}
B_n(x) = (x+\mathcal{B})^n.
\end{equation}
Furthermore, with \eqref{1.1} we have
\begin{equation}\label{3.2}
\exp\left(\mathcal{B}z\right)=\sum_{n=0}^\infty\mathcal{B}^n\frac{z^n}{n!}
=\frac{z}{e^z-1}.
\end{equation}
We also require several independent Bernoulli symbols 
$\mathcal{B}_{1},\dots,\mathcal{B}_{k}$. Independence means that if we have 
any two Bernoulli symbols, say $\mathcal{B}_1$ and $\mathcal{B}_2$, then
\begin{equation}\label{3.2a}
\mathcal{B}_1^{k}\mathcal{B}_2^{\ell}=B_kB_\ell.
\end{equation}
Second, the {\it uniform symbol\/} $\mathcal{U}$ is defined by
\begin{equation}\label{3.3}
f(x+\mathcal{U})=\int_0^1f(x+u)du.
\end{equation}
Here and elsewhere we assume that $f$ is an analytic function for which the
objects in question exist. From \eqref{3.3} we immediately obtain, in 
analogy to \eqref{3.1},
\begin{equation}\label{3.4}
\mathcal{U}^{n}=\frac{1}{n+1}\qquad(n=0, 1,\ldots),
\end{equation}
and using this, we get
\begin{equation}\label{3.5}
\exp\left(\mathcal{U}z\right)=\sum_{n=0}^\infty\mathcal{U}^n\frac{z^n}{n!}
=\frac{e^z-1}{z}.
\end{equation}
From \eqref{3.2} and \eqref{3.5} we now deduce
\[
\exp\left(z\left(\mathcal{B}+\mathcal{U}\right)\right)
=\sum_{n=0}^\infty\left(\mathcal{B}+\mathcal{U}\right)^n\frac{z^n}{n!} = 1,
\]
which means that $\mathcal{B}$ and $\mathcal{U}$ annihilate each other, 
i.e., $(\mathcal{B}+\mathcal{U})^n=0$ for all $n\neq 0$, in the sense that
\begin{equation}\label{3.6}
f(x+\mathcal{B}+\mathcal{U})=f(x),
\end{equation}
or in other words, we have the equivalence
\begin{equation}\label{3.7}
f(x)=g(x+\mathcal{U})\quad\Leftrightarrow\quad g(x)=f(x+\mathcal{B}).
\end{equation}
Finally, we note that \eqref{3.3} immediately gives, for any $u\in\mathbb R$,
\begin{equation}\label{3.8}
uf'(x+u\mathcal{U}) = f(x+u)-f(x),
\end{equation}
a difference equation that will be used repeatedly.

\medskip
{\bf 2.} It is well known that the Bernoulli polynomials are closely related
to the calculus of finite differences; see, e.g., the classic books \cite{Jo}
or \cite{No}. It is therefore not surprising that methods from difference
calculus turn out to be useful in the proofs of our main results.
Let $\Delta_u$ be the {\it forward difference operator\/} defined by
\begin{equation}\label{3.9}
\Delta_uf(x)=f(x+u)-f(x).
\end{equation}
With two (in general) distinct differences $u_1, u_2$ we compute
\begin{align*}
\Delta_{u_1}\Delta_{u_2}f(x)&=\left(f(x+u_2+u_1)-f(x+u_1)\right)
-\left(f(x+u_2)-f(x)\right) \\
&=\left(f(x+u_1+u_2)-f(x)\right)-\left(f(x+u_1)-f(x)\right)
-\left(f(x+u_2)-f(x)\right)\\
&=\Delta_{u_1+u_2}f(x)-\Delta_{u_1}f(x)-\Delta_{u_2}f(x),
\end{align*}
which gives the operator identity
\begin{equation}\label{3.10}
\Delta_{u_1+u_2}=\Delta_{u_1}\Delta_{u_2}+\Delta_{u_1}+\Delta_{u_2}.
\end{equation}
Similarly, one obtains
\begin{align}
\Delta_{u_1+u_2+u_3} &= \Delta_{u_1}\Delta_{u_2}\Delta_{u_3}
+\Delta_{u_1}\Delta_{u_2}+\Delta_{u_1}\Delta_{u_3}+\Delta_{u_2}\Delta_{u_3}\label{3.11} \\
&\quad +\Delta_{u_1}+\Delta_{u_2}+\Delta_{u_3}.\nonumber
\end{align}
To generalize these identities, we use the following notation: For a fixed 
integer $k\geq 1$ and for any subset $J\subseteq\{1,\ldots,k\}$, we denote
\begin{equation}\label{3.12}
\Delta_J:=\prod_{j\in J}\Delta_{u_j},
\end{equation}
and we let $|J|$ be the cardinality of $J$. We can now state and prove the
following simple but important lemma.

\begin{lemma}
For any $k\geq 1$ and for real numbers $u_1,\ldots,u_k$ we have
\begin{equation}\label{3.13}
\Delta_{u_1+\dots+u_k}=\sum_{j=1}^k\sum_{|J|=j}\Delta_J.
\end{equation}
\end{lemma}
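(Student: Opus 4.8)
The plan is to reduce the statement to the expansion of a product of commuting operators. Write $E_u$ for the shift operator $E_uf(x)=f(x+u)$, so that $\Delta_u=E_u-I$, where $I$ is the identity operator. The key structural fact is that shifts compose additively, $E_{u_1}E_{u_2}=E_{u_1+u_2}$; in particular all the $E_{u_j}$, and hence all the $\Delta_{u_j}$, commute with one another, which is exactly what makes the product $\Delta_J$ in \eqref{3.12} well defined independently of the order of its factors. Iterating the composition rule gives $E_{u_1+\dots+u_k}=E_{u_1}\cdots E_{u_k}$, and therefore
\begin{equation*}
\Delta_{u_1+\dots+u_k}=E_{u_1}\cdots E_{u_k}-I=\prod_{j=1}^k\left(I+\Delta_{u_j}\right)-I.
\end{equation*}

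First I would expand the product on the right. Since the factors commute, the expansion is the usual sum over all subsets $J\subseteq\{1,\dots,k\}$, where $J$ records which factors contribute their $\Delta_{u_j}$ term rather than their $I$ term:
\begin{equation*}
\prod_{j=1}^k\left(I+\Delta_{u_j}\right)=\sum_{J\subseteq\{1,\dots,k\}}\Delta_J,
\end{equation*}
with the convention $\Delta_\emptyset=I$. Separating off the empty set, whose term is $I$, and grouping the remaining subsets by their cardinality $|J|=j$ yields $\sum_{j=1}^k\sum_{|J|=j}\Delta_J$. Subtracting the leftover $I$ then gives \eqref{3.13} exactly.

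The only point requiring a little care is this subset expansion, and an alternative that avoids writing it out directly is a short induction on $k$ built on the identity \eqref{3.10}. For the inductive step, set $v=u_1+\dots+u_{k-1}$ and apply \eqref{3.10} to the two differences $v$ and $u_k$, obtaining $\Delta_{v+u_k}=\Delta_v\Delta_{u_k}+\Delta_v+\Delta_{u_k}$; substituting the inductive hypothesis for $\Delta_v$ then reproduces exactly the three families of nonempty subsets of $\{1,\dots,k\}$, namely those not containing $k$, the singleton $\{k\}$, and those of the form $J'\cup\{k\}$ with $J'\neq\emptyset$. I expect the subset bookkeeping in either route to be the main, and only mild, obstacle; the operator factorization makes it essentially immediate, so that is the route I would write out in full.
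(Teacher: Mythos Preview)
Your proof is correct and is essentially the same as the paper's: the paper writes the shift operator as $e^{u\partial}$ (so $\Delta_u=e^{u\partial}-1$), uses $e^{(u_1+\dots+u_k)\partial}=\prod_j e^{u_j\partial}$, and expands the product over subsets exactly as you do with $E_u=I+\Delta_u$. The paper also mentions the induction route as an alternative, just as you do.
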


The case $k=1$ is trivial, and we immediately see that $k=2$ and $k=3$ give 
the identities \eqref{3.10} and \eqref{3.11}, respectively.

\begin{proof}[Proof of Lemma~1]
This result can be proved by induction on $k$ in a straightforward way.
Alternatively, and more formally, we can use the shift operator
\[
f(x+u) = e^{u\partial}f(x),
\]
with the differential operator $\partial=\frac{d}{dx}$. Then we have
$\Delta_u=e^{u\partial}-1$, and
\[
\Delta_{u_1+\dots+u_k}=e^{(u_{1}+\dots+u_{k})\partial}-1
=\sum_{j=1}^k\sum_{|J|=j}\prod_{\ell\in J}\left(e^{u_{\ell}\partial}-1\right),
\]
and the result follows.
\end{proof}

{\bf 3.} We now apply results from the first two parts of this section to 
obtain a general identity for Bernoulli symbols, and thus for Bernoulli numbers
and polynomials. In what follows, we assume that for a fixed integer $k\geq 1$,
$u_1,\ldots,u_k$ are real parameters. To simplify notations, we write, for a
subset $J\subseteq\{1,\ldots,k\}$,
\begin{equation}\label{3.14}
u_{J} := \prod_{j\in J}u_{j},\qquad 
\left(u\mathcal{B}\right)_{J} := \sum_{j\in J}u_{j}\mathcal{B}_{j},\qquad
\overline{J}=\{1,\dots,k\}\setminus J.
\end{equation}
The following is, in fact, a restatement of an intermediate result in \cite{AD}.

\begin{lemma}
Let $u_1+\dots+u_k=1$. Then we have
\begin{equation}\label{3.15}
\frac{1}{n!}\left(x+u_1\mathcal{B}_1+\dots+u_k\mathcal{B}_k\right)^n
=\sum_{j=1}^k\sum_{|J|=j}\frac{u_J}{(n+1-j)!}
\left(x+\mathcal{B}_0+(u\mathcal{B})_{\overline{J}}\right)^{n-j+1},
\end{equation}
where $\mathcal{B}_0,\dots,\mathcal{B}_k$ are independent Bernoulli symbols.
\end{lemma}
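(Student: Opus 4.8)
The plan is to introduce the single master function
\[
F(x)=\frac{1}{(n+1)!}\bigl(x+\mathcal{B}_0+u_1\mathcal{B}_1+\dots+u_k\mathcal{B}_k\bigr)^{n+1}
\]
and to evaluate the operator $\Delta_{u_1+\dots+u_k}$ applied to $F$ in two different ways. Since $u_1+\dots+u_k=1$, this operator is just $\Delta_1=\Delta_{u_1+\dots+u_k}$, and the whole proof rests on the observation that a direct computation of $\Delta_1 F$ reproduces the left-hand side of \eqref{3.15}, while expanding the same quantity through Lemma~1 and the difference and annihilation rules of Part~1 produces the right-hand side term by term.

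First I would compute $\Delta_1 F(x)=F(x+1)-F(x)$ directly. Writing $y=x+u_1\mathcal{B}_1+\dots+u_k\mathcal{B}_k$, this difference equals $\frac{1}{(n+1)!}\bigl((y+1+\mathcal{B}_0)^{n+1}-(y+\mathcal{B}_0)^{n+1}\bigr)$, and by the symbolic form of the standard relation $B_m(y+1)-B_m(y)=m\,y^{m-1}$ (equivalently, by expanding with \eqref{3.1}) this collapses to $\frac{1}{n!}\bigl(x+u_1\mathcal{B}_1+\dots+u_k\mathcal{B}_k\bigr)^n$, which is exactly the left-hand side of \eqref{3.15}. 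This identifies one side of the bookkeeping.

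Next I would expand the same operator via Lemma~1 as $\Delta_{u_1+\dots+u_k}=\sum_{j=1}^k\sum_{|J|=j}\Delta_J$ and evaluate each $\Delta_J F$. Applying the difference equation \eqref{3.8} once for each index of $J$, introducing an independent uniform symbol $\mathcal{U}_j$ at each step, converts $\Delta_J$ into iterated differentiation, giving $\Delta_J F(x)=u_J\,F^{(|J|)}\bigl(x+\sum_{j\in J}u_j\mathcal{U}_j\bigr)$. Since $F^{(|J|)}(z)=\frac{1}{(n+1-|J|)!}\bigl(z+\mathcal{B}_0+\sum_i u_i\mathcal{B}_i\bigr)^{n+1-|J|}$, each such term contains, for every $j\in J$, the combination $u_j\mathcal{U}_j+u_j\mathcal{B}_j=u_j(\mathcal{U}_j+\mathcal{B}_j)$; by the annihilation property \eqref{3.6}, rescaled from coefficient $1$ to coefficient $u_j$, every such pair drops out, leaving precisely $\frac{u_J}{(n+1-|J|)!}\bigl(x+\mathcal{B}_0+(u\mathcal{B})_{\overline{J}}\bigr)^{n-|J|+1}$, the general summand of \eqref{3.15}. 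Summing over all nonempty $J$ and equating with the value of $\Delta_1 F$ from the previous step then yields the claimed identity. The step I expect to require the most care is this evaluation of $\Delta_J F$: one must justify that \eqref{3.8} may be iterated with a fresh independent $\mathcal{U}_j$ at each of the $|J|$ applications, and that these symbols pair off correctly with the matching $\mathcal{B}_j$ already present in $F$ so that \eqref{3.6} can be invoked index by index. Once the symbolic calculus is marshalled in this way, the remaining manipulation of the factors $u_J$ and the factorials is routine.
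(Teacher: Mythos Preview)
Your proposal is correct and follows essentially the same route as the paper: you introduce the same master function $F(x)=\frac{1}{(n+1)!}(x+\mathcal{B}_0+u_1\mathcal{B}_1+\dots+u_k\mathcal{B}_k)^{n+1}$, compute $\Delta_1 F$ directly to get the left-hand side, and expand $\Delta_{u_1+\dots+u_k}F$ via Lemma~1, evaluating each $\Delta_J F$ by iterating \eqref{3.8} and invoking the annihilation \eqref{3.6} to cancel each $u_j\mathcal{U}_j$ against $u_j\mathcal{B}_j$. The only cosmetic difference is that the paper obtains the left-hand side by writing $\Delta_1 F(x)=F'(x+\mathcal{U})$ and letting $\mathcal{U}$ annihilate $\mathcal{B}_0$, whereas you invoke $B_m(y+1)-B_m(y)=m\,y^{m-1}$ directly; these are equivalent formulations of the same step.
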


\begin{proof}
We apply the operator identity \eqref{3.13} to the function
\[
f(x):=\frac{1}{(n+1)!}
\left(x+\mathcal{B}_0+u_1\mathcal{B}_1+\dots+u_k\mathcal{B}_k\right)^{n+1}.
\]
Then the left-hand side of \eqref{3.13} gives, with \eqref{3.8},
\begin{align}
\Delta_{u_1+\dots+u_k}f(x) &= \Delta_1f(x)=f(x+1)-f(x)\label{3.16}\\
&= f'(x+\mathcal{U})
=\frac{1}{n!}\left(x+u_1\mathcal{B}_1+\dots+u_k\mathcal{B}_k\right)^n,\nonumber
\end{align}
where in the last step we used \eqref{3.6}, i.e., $\mathcal{B}_0$ is 
annihilated by $\mathcal{U}$. Similarly, we have for any $i=1,\ldots, k$, again
using \eqref{3.8},
\begin{align}
\Delta_{u_i}f(x) &= f(x+u_i)-f(x) = u_if'(x+u_i\mathcal{U}) \label{3.17}\\
&= u_i\frac{1}{n!}\left(x+\mathcal{B}_0
+(u\mathcal{B})_{\{1,\ldots,k\}\setminus\{i\}}\right)^n,\nonumber
\end{align}
having used the fact that the uniform symbol $\mathcal{U}$ annihilated the
Bernoulli symbol $\mathcal{B}_i$; note that the coefficients $u_i$ have to
match for the annihilation (i.e., identity \eqref{3.6}) to apply. Using the
definition \eqref{3.12} and successively applying \eqref{3.17}, we get
\[
\Delta_Jf(x)=\frac{u_J}{(n-j+1)!}
\left(x+\mathcal{B}_{0}+(u\mathcal{B})_{\overline{J}}\right)^{n+1-j}.
\]
Finally, applying \eqref{3.13} to this and to \eqref{3.16}, we immediately
get \eqref{3.15}.
\end{proof}

While the case $k=1$ is trivial, for $k=2$ and $k=3$ we get the following
identities.

\medskip
\noindent
{\bf Examples.} For $u_1+u_2=1$, we have
\begin{align*}
\left(x+u_1\mathcal{B}_1+u_2\mathcal{B}_2\right)^n 
&= u_1\left(x+\mathcal{B}_0+u_2\mathcal{B}_2\right)^n
+u_2\left(x+\mathcal{B}_0+u_1\mathcal{B}_1\right)^n\\
&\quad +u_1u_2n\left(x+\mathcal{B}_0\right)^{n-1},
\end{align*}
and for $u_1+u_2+u_2=1$,
\begin{align*}
&\left(x+u_1\mathcal{B}_1+u_2\mathcal{B}_2+u_3\mathcal{B}_3\right)^n 
= \left[u_1\left(x+\mathcal{B}_0+u_2\mathcal{B}_2+u_3\mathcal{B}_3\right)^n+o.t.\right]\\
&\qquad+\left[nu_1u_2\left(x+\mathcal{B}_0+u_3\mathcal{B}_3\right)^{n-1}+o.t.\right]
+n\left(n-1\right)u_1u_2u_3\left(x+\mathcal{B}_0\right)^{n-2},
\end{align*}
where ``o.t." in each of the first and second rows stands for the ``other terms"
obtained by cyclically permuting the subscripts $\{1,2,3\}$.

\medskip
\noindent
{\bf Remarks.} (1) The left-hand side of \eqref{3.15}, and in fact also the
terms on the right-hand side, could be written as Bernoulli polynomials of
higher order, as defined in identity (30) in \cite[p.~39]{EMOT}. We will not
pursue this further.

(2) It is clear from the proof of Lemma~2 that, more generally, 
for any analytic function $f$ and $u_1+\dots+u_k=1$ we have
\[
f(x+u_1\mathcal{B}_1+\dots+u_k\mathcal{B}_k)
=\sum_{j=1}^n\sum_{|J|=j}u_J
f^{(j-1)}\left(x+\mathcal{B}_0+(u\mathcal{B})_{\overline{J}}\right),
\]
and in particular, for $k=2$ and $u_1+u_2=1$,
\begin{align*}
f(x+u_1\mathcal{B}_1+u_2\mathcal{B}_2) 
&=u_2f(x+\mathcal{B}_0+u_1\mathcal{B}_1)+u_1f(x+\mathcal{B}_0+u_2\mathcal{B}_2)\\
&\quad +u_1u_2f'(x+\mathcal{B}_0).
\end{align*}

The main results of this paper are based on Lemma~2 and an analogue for Euler
polynomials, and will be obtained by considering the expectation when
$u_1,\ldots,u_k$ are taken to be certain random variables. 

\section{Generalization and proof of Theorem~1}

{\bf 1.} In this section we prove a higher-order analogue of Theorem~1, of 
which the latter is an immediate consequence. The proof uses some probabilistic
methods which will be summarized in a brief subsection.

\begin{theorem}
For integers $k\geq 2$ and $n\geq 0$ and for positive real parameters 
$a_1,\ldots,a_k$ we have
\begin{align}
&\sum_{l_1+\dots+l_k=n}\binom{n}{l_1,\ldots,l_k}
\frac{(a_1)_{l_1}\dots(a_k)_{l_k}}{(a_1+\dots+a_k)_n}B_{l_1}(x)\dots B_{l_k}(x)
=\sum_{j=1}^k\sum_{|J|=j}\frac{a_J n!}{(n+1-j)!}\label{4.0} \\
&\qquad\times\sum_{\substack{l_0+l_1+\dots+l_{k-j}\\=n+1-j}}\binom{n+1-j}{l_0,l_1,\ldots,l_{k-j}}
\frac{(a_{i_{j+1}})_{l_1}\dots(a_{i_k})_{l_{k-j}}}{(a_1+\dots+a_k)_{n+1-l_0}}
B_{l_0}(x)B_{l_1}\dots B_{l_{k-j}}.\nonumber
\end{align}
\end{theorem}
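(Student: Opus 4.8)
The plan is to derive \eqref{4.0} by taking expectations in the symbolic identity \eqref{3.15} of Lemma~2, where the weights $u_1,\ldots,u_k$ are replaced by a suitable random vector on the simplex $u_1+\dots+u_k=1$. The natural choice is a Dirichlet-distributed vector with parameters $a_1,\ldots,a_k$, since its moments are governed precisely by Pochhammer symbols: one has, for nonnegative integers $l_1,\ldots,l_k$,
\[
\mathbb{E}\bigl[u_1^{l_1}\cdots u_k^{l_k}\bigr]
=\frac{(a_1)_{l_1}\cdots(a_k)_{l_k}}{(a_1+\dots+a_k)_{l_1+\dots+l_k}}.
\]
First I would record this moment formula (it is the elementary result on uniformly distributed random variables alluded to in the abstract, the case $k=2$ reducing to a Beta integral), and verify that the Dirichlet vector is indeed supported on the simplex so that the hypothesis $u_1+\dots+u_k=1$ of Lemma~2 holds almost surely, licensing us to apply \eqref{3.15} pathwise before averaging.

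Next I would expand both sides of \eqref{3.15} multinomially in the symbolic variables and then apply $\mathbb{E}$. On the left, $(x+u_1\mathcal{B}_1+\dots+u_k\mathcal{B}_k)^n$ expands via the multinomial theorem into a sum over $l_1+\dots+l_k\le n$ of terms $\binom{n}{l_1,\ldots,l_k,\,n-\sum l_i}x^{n-\sum l_i}u_1^{l_1}\cdots u_k^{l_k}\mathcal{B}_1^{l_1}\cdots\mathcal{B}_k^{l_k}$; using independence \eqref{3.2a} to turn $\mathcal{B}_i^{l_i}$ into $B_{l_i}$, taking the expectation of the monomial $u_1^{l_1}\cdots u_k^{l_k}$ against the Dirichlet moment formula, and then reassembling the $x$-powers with the $B_{l_i}$ into Bernoulli polynomials $B_{l_i}(x)$ via \eqref{3.1a}, should produce exactly the left-hand side of \eqref{4.0}. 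The same expand-and-average procedure applied to each summand $\frac{u_J}{(n+1-j)!}(x+\mathcal{B}_0+(u\mathcal{B})_{\overline{J}})^{n-j+1}$ on the right should yield the right-hand side, the factor $u_J=\prod_{j\in J}u_j$ combining with the multinomial moments to produce the shifted Pochhammer ratio $\frac{a_J\,(a_{i_{j+1}})_{l_1}\cdots(a_{i_k})_{l_{k-j}}}{(a_1+\dots+a_k)_{n+1-l_0}}$, while the free symbol $\mathcal{B}_0$ (which carries no $u$-weight and is not averaged) supplies the factor $B_{l_0}(x)$ rather than $B_{l_0}$.

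The main obstacle I expect is a careful bookkeeping issue rather than a conceptual one: matching the Pochhammer denominators and the index conventions. On the left the denominator is $(a_1+\dots+a_k)_n$, a constant, whereas on the right it is $(a_1+\dots+a_k)_{n+1-l_0}$, which depends on the summation variable $l_0$. The reason is that averaging $u_J\cdot u_{i_{j+1}}^{l_1}\cdots u_{i_k}^{l_{k-j}}$ over the Dirichlet law produces a Pochhammer ratio whose total exponent is $j+l_1+\dots+l_{k-j}=j+(n+1-j-l_0)=n+1-l_0$, so the denominator index $n+1-l_0$ emerges naturally once the constraint $l_0+l_1+\dots+l_{k-j}=n+1-j$ is imposed and the extra factor of $a_J$ from the unit exponents on the coordinates in $J$ is accounted for. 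I would verify this exponent arithmetic with extreme care, paying particular attention to the shift $n\mapsto n+1-j$ in the degree coming from \eqref{3.15}, to the relabeling of the surviving indices $\overline{J}=\{i_{j+1},\ldots,i_k\}$, and to the edge behaviour at $l_0=0$ and at the extreme subsets $J$.

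Finally I would note that specializing to $k=2$, writing $a_1=a$, $a_2=b$, and reading off the $j=1$ and $j=2$ contributions, should reproduce Theorem~\ref{thm:Thm1} exactly, with the $j=2$ term (where $J=\{1,2\}$, $a_J=ab$, and the degree drops to $n-1$) giving the trailing $\frac{ab}{(a+b+1)(a+b)}nB_{n-1}(x)$ summand; this consistency check serves both as motivation and as a guard against sign or index errors in the general derivation.
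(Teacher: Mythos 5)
Your overall route is the same as the paper's: realize $(u_1,\ldots,u_k)$ as a Dirichlet vector (the paper constructs it from independent gamma variables as in \eqref{4.6}, which is also how it derives your moment formula \eqref{4.10}, via the independence property \eqref{4.7}), apply Lemma~2 pathwise on the simplex, expand multinomially, and average. Your treatment of the right-hand side --- in particular the exponent count $j+(n+1-j-l_0)=n+1-l_0$ and the observation that the unit exponents on the coordinates in $J$ contribute the factor $a_J$ --- is exactly the paper's computation, and your $k=2$ consistency check is sound.

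There is, however, one step which, as you describe it, does not go through: on the left-hand side you expand $(x+u_1\mathcal{B}_1+\cdots+u_k\mathcal{B}_k)^n$ with $x$ kept as a separate letter, take expectations of the monomials $u_1^{l_1}\cdots u_k^{l_k}$, and only \emph{afterwards} ``reassemble'' the powers of $x$ with the $B_{l_i}$ into Bernoulli polynomials. After averaging, this reassembly is no longer a formal regrouping. Writing $A=a_1+\cdots+a_k$, your generic term carries the weight $(a_1)_{m_1}\cdots(a_k)_{m_k}/(A)_{m_1+\cdots+m_k}$, whose Pochhammer denominator varies with $m_1+\cdots+m_k$, whereas every term on the left-hand side of \eqref{4.0} carries the constant denominator $(A)_n$; identifying the two amounts to the nontrivial identity
\[
\sum_{\substack{l_i\ge m_i\\ l_1+\cdots+l_k=n}}\binom{n}{l_1,\ldots,l_k}\prod_{i=1}^k\binom{l_i}{m_i}\,\frac{(a_1)_{l_1}\cdots(a_k)_{l_k}}{(A)_n}
=\binom{n}{m_1,\ldots,m_k,\,n-\textstyle\sum_i m_i}\frac{(a_1)_{m_1}\cdots(a_k)_{m_k}}{(A)_{m_1+\cdots+m_k}},
\]
which you would then still have to prove. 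The fix is to do the grouping \emph{before} taking expectations, which is what the paper does in \eqref{4.11}: since $u_1+\cdots+u_k=1$ almost surely, write $x+\sum_i u_i\mathcal{B}_i=\sum_i u_i(x+\mathcal{B}_i)$ and expand, so that every monomial $u_1^{l_1}\cdots u_k^{l_k}$ has total degree exactly $n$ and \eqref{4.10} produces the constant denominator $(A)_n$ at once. (Equivalently, insert the factor $(u_1+\cdots+u_k)^{n-\sum_i m_i}=1$ inside the expectation before averaging; that is precisely what proves the displayed identity.) With this one-line correction your argument coincides with the paper's proof.
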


When $k=2$, we immediately get Theorem~1. For $k=3$ and $a_1=a_2=a_3=1$ we get,
after some easy transformations and renaming the summation indices,
\begin{gather}
(n+3)\sum_{i+j+l=n}B_i(x)B_j(x)B_l(x)
= 3\sum_{i+j+l=n}\binom{n+3}{i}B_i(x)B_jB_l\label{4.0a}\\
\qquad+3\sum_{i+j=n-1}\binom{n+3}{i}B_i(x)B_j
+\binom{n+3}{5}B_{n-2}(x),\nonumber
\end{gather}
valid for $n\geq 3$; this is Corollary~1 in \cite{AD}. Other special cases 
with $k=3$ will be considered
later, in Section~6. For arbitrary $k\geq 2$, with $a_1=\dots=a_k=1$, we
recover Theorem~1 in \cite{AD}, which for $x=0$ gives a $k$th order analogue
of Matiyasevich's identity \eqref{1.4}, namely 
\[
\sum_{l_1+\dots+l_k=n}B_{l_1}\dots B_{l_k}
=\frac{1}{n+k}\sum_{j=1}^k\binom{k}{j}
\sum_{\substack{l_0+l_1+\dots+l_{k-j}\\=n+1-j}}\binom{n+k}{l_0}
B_{l_0}B_{l_1}\dots B_{l_{k-j}}.
\]

{\bf 2.} We now summarize some facts from probability theory that will be
used in the proofs that follow. For the basics we refer the reader to any 
introductory text in probability theory, e.g., \cite{Du} or \cite{Wa}.
For the interplay between probability theory and umbral calculus, see 
\cite{SW}.

We assume that $X$ is a continuous random variable with probability density
function $f_X(x)$, i.e.,
\begin{equation}\label{4.1}
{\rm Pr}(X\leq x) = \int_{-\infty}^x f_X(y)dy.
\end{equation}
Given a measurable function $g:{\mathbb R}\rightarrow {\mathbb R}$ such that
the image random variable $g(X)$ is absolutely integrable, its expectation
can be expressed as
\begin{equation}\label{4.2}
{\mathbb E}g(X) = \int_{-\infty}^\infty g(y)f_X(y)dy.
\end{equation}
The main tool in this section is the use of random variables with a gamma
distribution of ``scale parameter" 1. We write such a random variable as
$X \sim\Gamma_a$, with ``shape parameter" $a>0$, defined by the density function
\begin{equation}\label{4.3}
f_X(x;a) = \begin{cases}
\tfrac{1}{\Gamma(a)}x^{a-1}e^{-x} &\hbox{for}\; x\geq 0,\\
0 &\hbox{otherwise}.
\end{cases}
\end{equation}
Then from the definition of the gamma function,
\[
\Gamma(s) = \int_0^\infty x^{s-1}e^{-x}dx,
\]
and with \eqref{4.2} and \eqref{4.3} we immediately get, for an integer 
$n\geq 1$,
\begin{equation}\label{4.4}
{\mathbb E}(\Gamma_a^n) = \int_0^\infty y^n\tfrac{1}{\Gamma(a)}y^{a-1}e^{-y}dy
=\frac{\Gamma(a+n)}{\Gamma(a)} = (a)_n.
\end{equation}
An essential property of the gamma distribution is additivity, i.e., if 
$\Gamma_{a_1},\ldots,\Gamma_{a_k}$ are independent gamma distributed random
variables, then
\begin{equation}\label{4.5}
\Gamma_{a_1}+\dots+\Gamma_{a_k} \sim \Gamma_{a_1+\dots+a_k},
\end{equation}
where the symbol $\sim$ indicates that the random variables on both sides have
the same distribution. The relation \eqref{4.5} follows from the fact that the
density probability function for the sum of two independent random variables
is the convolution of the individual ones; see, e.g., \cite[p.~107]{Wa}. 

The next important tool is the choice of random coefficients $u_1,\ldots, u_k$
such that $(u_1,\ldots, u_k)$ follows a Dirichlet distribution with parameters
$(a_1,\ldots, a_k)$. This is equivalent to choosing $k$ independent gamma
random variables $\Gamma_{a_i}$, each having shape parameter $a_i$, and to 
define
\begin{equation}\label{4.6}
u_i=\frac{\Gamma_{a_i}}{\Gamma_{a_1}+\dots+\Gamma_{a_k}},\qquad 1\leq i\leq k;
\end{equation}
note that $u_1+\dots+u_k=1$.
For Dirichlet distributions in general, see, e.g., \cite[p.~231]{JK}. We now 
need an important property of gamma random variables, namely that
$\Gamma_a+\Gamma_b$ and $\Gamma_a/(\Gamma_a+\Gamma_b)$ are independent when
$\Gamma_a$ and $\Gamma_b$ are. In fact, this characterizes gamma random 
variables; see \cite{Lu}. This is easily extended to the
statement that
\begin{equation}\label{4.7}
\Gamma_{a_1}+\dots+\Gamma_{a_k}\quad\hbox{and}\quad
\frac{\Gamma_{a_i}}{\Gamma_{a_1}+\dots+\Gamma_{a_k}},\qquad 1\leq i\leq k,
\end{equation}
are independent. The importance of this lies in the fact that 
${\mathbb E}(XY)={\mathbb E}(X){\mathbb E}(Y)$ for independent random 
variables $X$ and $Y$. 

Combining all of the above, we first note that for any positive integers
$l_1,\ldots, l_k$ we have by \eqref{4.6},
\begin{align}
{\mathbb E}[(\Gamma_{a_1}+\dots+\Gamma_{a_k})^{l_1+\dots+l_k}
(u_1^{l_1}\dots u_k^{l_k})]
&={\mathbb E}(\Gamma_{a_1}^{l_1}\dots\Gamma_{a_k}^{l_k})\label{4.8}\\
&={\mathbb E}(\Gamma_{a_1}^{l_1})\dots
{\mathbb E}(\Gamma_{a_k}^{l_k}).\nonumber
\end{align}
On the other hand, using the independence of the terms in \eqref{4.7}, we see
that the left-hand side of \eqref{4.8} is equal to 
\begin{equation}\label{4.9}
{\mathbb E}(\Gamma_{a_1+\dots+a_k}^{l_1+\dots+l_k})
{\mathbb E}(u_1^{l_1}\dots u_k^{l_k}),
\end{equation}
having also used \eqref{4.5}. Finally, applying \eqref{4.4} to the right-hand
side of \eqref{4.8} and to \eqref{4.9}, we get
\begin{equation}\label{4.10}
{\mathbb E}(u_1^{l_1}\dots u_k^{l_k})
=\frac{(a_1)_{l_1}\dots (a_k)_{l_k}}{(a_1+\dots+a_k)_{l_1+\dots+l_k}}.
\end{equation}
This identity will be used repeatedly in what follows.

\medskip
{\bf 3.} We are now ready to prove Theorem~2; as we shall see, much of the 
work was already done in obtaining the identities \eqref{3.15} and \eqref{4.10}.

\begin{proof}[Proof of Theorem~2]
We choose $u_1,\ldots, u_k$ as in \eqref{4.6}. Since $u_1+\dots+u_k=1$, we can
rewrite the $n$th power term on the left-hand side of \eqref{3.15} as follows,
and then apply a multinomial expansion, using \eqref{3.1a}:
\begin{align}
&\left(u_1(x+\mathcal{B}_1)+\dots+u_k(x+\mathcal{B}_k)\right)^n\label{4.11}\\
&\qquad\qquad=\sum_{l_1+\dots+l_k=n}\binom{n}{l_1,\ldots,l_k}
u_1^{l_1}\dots u_k^{l_k}B_{l_1}(x)\dots B_{l_k}(x).\nonumber
\end{align}
Similarly, we use multinomial expansions for the powers on the right of 
\eqref{3.15}, this time combining the terms $x+\mathcal{B}_0$ for the sake of
applying \eqref{3.1a}:
\begin{align}
&\left(x+\mathcal{B}_0+(u\mathcal{B})_{\overline{J}}\right)^{n-j+1}\label{4.12}\\
&\qquad=\sum_{\substack{l_0+l_1+\dots+l_{k-j}\\=n+1-j}}\binom{n+1-j}{l_0,l_1,\ldots,l_{k-j}}
B_{l_0}(x)\left(u_{i_{j+1}}\mathcal{B}_{i_{j+1}}\right)^{l_1}\dots
\left(u_{i_k}\mathcal{B}_{i_k}\right)^{l_{k-j}}\nonumber\\
&\qquad=\sum_{\substack{l_0+l_1+\dots+l_{k-j}\\=n+1-j}}\binom{n+1-j}{l_0,l_1,\ldots,l_{k-j}}
u_{i_{j+1}}^{l_1}\dots u_{i_k}^{l_{k-j}}B_{l_0}(x)B_{l_1}\dots B_{l_{k-j}},\nonumber
\end{align}
where we have also used \eqref{3.2a}. All that remains to be done now is to 
compute the expectation on both sides of \eqref{3.15}, which mainly involves
applying \eqref{4.10} to the right-hand sides of \eqref{4.11} and \eqref{4.12}.
In particular, keeping the first notation in \eqref{3.14} in mind, we have
\begin{align*}
{\mathbb E}(u_Ju_{i_{j+1}}^{l_1}\dots u_{i_k}^{l_{k-j}})
&=\frac{(a_{i_1})_1\dots(a_{i_j})_1(a_{i_{j+1}})_{l_1}\dots(a_{i_k})_{l_{k-j}}}
{(a_1+\dots+a_k)_{n+1-l_0}} \\
&=a_J\frac{(a_{i_{j+1}})_{l_1}\dots(a_{i_k})_{l_{k-j}}}{(a_1+\dots+a_k)_{n+1-l_0}},
\end{align*}
where we have used the fact that $(a)_1=a$ and, in the denominator, that 
$1+\dots +l_1+\dots+l_{k-j}=j+(n-j+1)-l_0=n+1-l_0$. This completes the proof. 
\end{proof}

\section{Euler numbers and polynomials}

The Euler numbers and polynomials are often considered in parallel with their
Bernoulli analogues. Indeed, they are similar in various respects, including
their importance in the classical calculus of finite differences (see, e.g.,
\cite{Jo} or \cite{No}). In this section we follow the outlines of the previous
sections to derive analogous results for Euler polynomials and, to a lesser
extent, Euler and Genocchi numbers.

\medskip
{\bf 1.} The {\it Euler numbers\/} $E_n$, $n=0, 1, 2,\ldots$, 
can be defined by 
\begin{equation}\label{5.1}
\frac{2}{e^z+e^{-z}} = \sum_{n=0}^\infty E_n\frac{z^n}{n!}\qquad 
(|z|< \tfrac{\pi}{2}).
\end{equation}
The Euler numbers are all integers with $E_n=0$ when $n$ is odd; the first
few values are listed in Table~1. The {\it Euler polynomials\/} can be 
defined by
\begin{equation}\label{5.2}
E_n(x):=\sum_{j=0}^n\binom{n}{j}\frac{E_j}{2^j}(x-\tfrac{1}{2})^{n-j},
\end{equation}
or equivalently by the generating function
\begin{equation}\label{5.3}
\frac{2e^{xz}}{e^z+1}=\sum_{n=0}^\infty E_n(x)\frac{z^n}{n!}\qquad (|z|< \pi).
\end{equation}
A key consequence of \eqref{5.3} is the functional equation
\begin{equation}\label{5.4}
E_n(x)+E_n(x+1)=2x^n,\qquad n=0,1,2,\ldots,
\end{equation}
which gives rise to numerous applications.
One important difference to the Bernoulli case is the fact that $E_n(0)$ is 
{\it not\/} the $n$th Euler number. The {\it Genocchi numbers\/} $G_n$, are 
often used instead; they are closely related to the Bernoulli numbers via
\begin{equation}\label{5.5}
G_n := 2(1-2^n)B_n\qquad (n=0, 1, 2,\ldots).
\end{equation}
These numbers are all integers; the first few values are also listed 
in Table~1.

\bigskip
\begin{center}
{\renewcommand{\arraystretch}{1.2}
\begin{tabular}{|r||r|r|r|l|l|}
\hline
$n$ & $B_n$ & $E_n$ & $G_n$ & $B_n(x)$ & $E_n(x)$\\
\hline
0 & 1 & 1 & 0 & 1 & 1 \\
1 & $-1/2$ & 0 & 1 & $x-\tfrac{1}{2}$ & $x-\tfrac{1}{2}$ \\
2 & $1/6$ & $-1$ & $-1$ & $x^2-x+\tfrac{1}{6}$ & $x^2-x$ \\
3 & 0 & 0 & 0 & $x^3-\tfrac{3}{2}x^2+\tfrac{1}{2}x$ & $x^3-\tfrac{3}{2}x^2+\tfrac{1}{4}$ \\
4 & $-1/30$ & 5 & 1 & $x^4-2x^3+x^2-\tfrac{1}{30}$ &
$x^4-2x^3+x$ \\
5 & 0 & 0 & 0 & $x^5-\tfrac{5}{2}x^4+\tfrac{5}{3}x^3-\tfrac{1}{6}x$ &
$x^5-\tfrac{5}{2}x^4+\tfrac{5}{2}x^2-\tfrac{1}{2}$ \\
6 & $1/42$ & $-61$ & $-3$ & $x^6-3x^5+\tfrac{5}{2}x^4-\tfrac{1}{2}x^2+\tfrac{1}{42}$ &
$x^6-3x^5+5x^3-3x$ \\
\hline
\end{tabular}}

\medskip
{\bf Table~1}: $B_n, E_n, G_n, B_n(x)$ and $E_n(x)$ for $0\leq k\leq 6$.
\end{center}
\bigskip

By elementary manipulations of the relevant generating functions, we get
\begin{equation}\label{5.6}
E_n(0)=\frac{1}{n+1}G_{n+1},\qquad E_n(\tfrac{1}{2})=2^{-n}E_n \qquad
(n=0, 1, 2,\ldots).
\end{equation}
The Euler polynomial analogue of Theorem~1 can now be stated as follows.

\begin{theorem}
For integers $n\geq 1$ and real numbers $a, b> 0$ we have
\begin{align}
\sum_{l=0}^{n}\binom{n}{l}&\frac{(a)_{l}(b)_{n-l}}{(a+b)_{n}}E_{l}(x)E_{n-l}(x)
= \frac{4}{n+1}B_{n+1}(x) \label{5.6a}\\ 
&-\frac{2}{n+1}\sum_{l=0}^{n+1}\binom{n+1}{l}
\frac{(a)_{l}+(b)_{l}}{(a+b)_l}E_l(0)B_{n+1-l}(x)\nonumber
\end{align}
\end{theorem}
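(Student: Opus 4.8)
The plan is to mirror the symbolic-and-probabilistic scheme of Sections~3 and~4, with the Bernoulli symbol replaced by an Euler symbol and the uniform symbol replaced by its Euler counterpart. First I would introduce the \emph{Euler symbol} $\mathcal{E}$ by $\mathcal{E}^n=E_n(0)$, so that $E_n(x)=(x+\mathcal{E})^n$ and $\exp(\mathcal{E}z)=2/(e^z+1)$, together with a complementary \emph{coin-flip symbol} $\mathcal{V}$ defined by $f(x+u\mathcal{V})=\tfrac12\bigl(f(x)+f(x+u)\bigr)$, equivalently $\mathcal{V}^0=1$ and $\mathcal{V}^n=\tfrac12$ for $n\geq1$, so that $\exp(\mathcal{V}z)=(e^z+1)/2$. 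Since $\exp(\mathcal{E}z)\exp(\mathcal{V}z)=1$, the symbols $\mathcal{E}$ and $\mathcal{V}$ annihilate each other, i.e. $f(x+\mathcal{E}+\mathcal{V})=f(x)$; for $f(y)=y^n$ this is precisely the functional equation \eqref{5.4}. As in Section~4 I would take $(u_1,u_2)$ to be Dirichlet-distributed with parameters $(a,b)$, so that $u_1+u_2=1$ and \eqref{4.10} applies; note in particular that $\mathbb{E}(u_1^l)=(a)_l/(a+b)_l$ and $\mathbb{E}(u_2^l)=(b)_l/(a+b)_l$.

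By \eqref{4.10}, and a multinomial expansion as in \eqref{4.11} (using $u_1+u_2=1$), the left-hand side of \eqref{5.6a} equals $\mathbb{E}\bigl[(x+u_1\mathcal{E}_1+u_2\mathcal{E}_2)^n\bigr]$. Expanding $(x+\mathcal{B}_0+u_i\mathcal{E}_i)^{n+1}$ multinomially, with $\mathcal{E}_i^l=E_l(0)$ and $(x+\mathcal{B}_0)^m=B_m(x)$, shows likewise that the right-hand side of \eqref{5.6a} is the expectation of
\[
\frac{4}{n+1}(x+\mathcal{B}_0)^{n+1}-\frac{2}{n+1}\Bigl[(x+\mathcal{B}_0+u_1\mathcal{E}_1)^{n+1}+(x+\mathcal{B}_0+u_2\mathcal{E}_2)^{n+1}\Bigr].
\]
It therefore suffices to prove, for real parameters with $u_1+u_2=1$ and independent symbols $\mathcal{B}_0,\mathcal{E}_1,\mathcal{E}_2$, the pointwise symbolic identity stating that $(x+u_1\mathcal{E}_1+u_2\mathcal{E}_2)^n$ equals the bracketed expression displayed above.

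To establish that identity I would put $P(y):=(y+\mathcal{B}_0+u_1\mathcal{E}_1+u_2\mathcal{E}_2)^{n+1}$ and argue in two steps. The first step uses the Bernoulli calculus: since $P(y)=B_{n+1}\bigl(y+u_1\mathcal{E}_1+u_2\mathcal{E}_2\bigr)$ symbolically by \eqref{3.1a}, the relation $B_{n+1}(z+1)-B_{n+1}(z)=(n+1)z^n$ (equivalently \eqref{3.8} with the uniform symbol annihilating $\mathcal{B}_0$) gives $(x+u_1\mathcal{E}_1+u_2\mathcal{E}_2)^n=\tfrac{1}{n+1}\bigl(P(x+1)-P(x)\bigr)$. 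The second step uses the Euler calculus: applying the annihilation $\mathcal{E}_i+\mathcal{V}_i$ with matching coefficient $u_i$ (so that $u_i\mathcal{V}_i$ averages over a shift by $u_i$) to one or both Euler symbols yields
\begin{align*}
P(x)+P(x+u_1)&=2(x+\mathcal{B}_0+u_2\mathcal{E}_2)^{n+1},\\
P(x)+P(x+u_2)&=2(x+\mathcal{B}_0+u_1\mathcal{E}_1)^{n+1},\\
P(x)+P(x+u_1)+P(x+u_2)+P(x+1)&=4(x+\mathcal{B}_0)^{n+1}.
\end{align*}
A short linear combination of these three relations eliminates $P(x)$, $P(x+u_1)$ and $P(x+u_2)$ and produces $P(x+1)-P(x)=4(x+\mathcal{B}_0)^{n+1}-2[(x+\mathcal{B}_0+u_1\mathcal{E}_1)^{n+1}+(x+\mathcal{B}_0+u_2\mathcal{E}_2)^{n+1}]$; dividing by $n+1$ is exactly the desired pointwise identity.

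Taking expectations over the Dirichlet variables and applying \eqref{4.10} termwise, as in the proof of Theorem~2, then yields \eqref{5.6a}. The main obstacle, and the conceptual point of the theorem, is that a convolution of \emph{Euler} polynomials produces \emph{Bernoulli} polynomials on the right: this forces a genuine interplay of the two calculi, with the difference (uniform/$\mathcal{B}_0$) mechanism supplying the single spare Bernoulli symbol and the averaging (coin-flip/$\mathcal{E}$) mechanism separating $\mathcal{E}_1$ from $\mathcal{E}_2$. The delicate part is arranging the three averaging relations so that every mixed term $P(x+u_i)$ cancels, leaving precisely the coefficients $4/(n+1)$ and $-2/(n+1)$; the fact that the $x^{n+1}$ contributions on the right must cancel (the right-hand side has degree $n$ in $x$) is a convenient consistency check.
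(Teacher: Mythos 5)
Your proposal is correct and is essentially the paper's own argument, specialized to the case $k=2$: your Euler symbol $\mathcal{E}$ and coin-flip symbol $\mathcal{V}$ are the paper's \eqref{5.7}--\eqref{5.11}, your three averaging relations and their linear combination are exactly the $k=2$ operator identity $\Delta_{u_1+u_2}=4\delta_{u_1}\delta_{u_2}-2\delta_{u_1}-2\delta_{u_2}$ (i.e.\ \eqref{5.15} combined with $\Delta_u=2\delta_u-2$) applied to $P$, reproducing Lemma~4 (identity \eqref{5.19}) for $k=2$, and the concluding Dirichlet-expectation step via \eqref{4.10} is the proof of Theorem~4. The only difference is that the paper proves the general $k$th-order statement and reads off Theorem~3 as the special case $k=2$, whereas you argue that case directly.
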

As in the case of Theorem~1, this result follows from a higher-order 
convolution identity that will be proved later. As a special case of
\eqref{5.6a}, for $a=b=1$, we get the following Euler polynomial analogue
of Matiyasevich's identity:
\[
(n+2)\sum_{l=0}^{n}\binom{n}{l}E_{l}(x)E_{n-l}(x)
=4(n+2)B_{n+1}(x)-4\sum_{l=0}^{n+1}\binom{n+2}{l}B_l(x)E_{n+1-l}(0).
\]
This identity was earlier obtained as Corollary~2 in \cite{AD}.

\medskip
{\bf 2.} As we develop a formalism parallel to that involving the Bernoulli
symbol, we note that the analogue of $B_n$ is $E_n(0)$. Thus, we define the
{\it Euler symbol} $\mathcal E$ by
\begin{equation}\label{5.7}
{\mathcal E}^n = E_n(0),\qquad n=0,1,2,\ldots,
\end{equation}
and elementary manipulation of the generating function \eqref{5.3} gives
\begin{equation}\label{5.8}
E_n(x) = (x+{\mathcal E})^n,\qquad n=0,1,2,\ldots.
\end{equation}
The analogue to the uniform symbol $\mathcal U$ defined in Section~3 is the 
uniform discrete symbol $\mathcal V$ with generating function
\begin{equation}\label{5.9}
e^{z{\mathcal V}} = \tfrac{1}{2}+\tfrac{1}{2}e^z,
\end{equation}
or equivalently defined by 
\[
f(x+\mathcal{V}) = \frac{f(x)+f(x+1)}{2}
\]
for an analytic function $f$; this is a discrete analogue of \eqref{3.3}.
With a change of variable we have for any real $u$,
\begin{equation}\label{5.10}
f(x+u{\mathcal V}) = \frac{1}{2}f(x)+\frac{1}{2}f(x+u),
\end{equation}
which is analogous to \eqref{3.8}, and which will be just as useful. Next,
by multiplying \eqref{5.3}, setting $x=0$, with \eqref{5.9}, we see that in
analogy with \eqref{3.6} and \eqref{3.7} we have ${\mathcal E}+{\mathcal V}=0$
in the sense that 
\begin{equation}\label{5.11}
f(x+{\mathcal E}+{\mathcal V}) = f(x),
\end{equation}
or in other words,
\begin{equation}\label{5.12}
f(x)=g(x+\mathcal{V})\quad\Leftrightarrow\quad g(x)=f(x+\mathcal{E}).
\end{equation}

{\bf 3.} The functional equations \eqref{5.4} and \eqref{5.10} give rise to 
the definition of the {\it discrete forward difference operator} $\delta_u$ 
defined by 
\begin{equation}\label{5.13}
\delta_uf(x) = \frac{f(x)+f(x+u)}{2}.
\end{equation}
Thus, in particular, we have $\delta_1E_n(x)=x^n$ and by \eqref{5.10},
\begin{equation}\label{5.14}
\delta_uf(x) = f(x+u{\mathcal V}).
\end{equation}
In analogy to \eqref{3.10} we now
compute
\begin{align*}
2\delta_{u_1}\delta_{u_2}f(x)
&=\frac{f(x)+f(x+u_2)}{2}+\frac{f(x+u_1)+f(x+u_2+u_1)}{2} \\
&=\frac{f(x+)+f(x+u_1+u_2)}{2}+\frac{f(x)+f(x+u_1)}{2}\\
&\qquad+\frac{f(x)+f(x+u_2)}{2}-f(x), 
\end{align*}
which gives the operator identity
\begin{equation}\label{5.15}
\delta_{u_1+u_2}=2\delta_{u_1}\delta_{u_2}-\delta_{u_1}-\delta_{u_2}+1.
\end{equation}
Similarly, one obtains
\begin{align}
\delta_{u_1+u_2+u_3} &= 4\delta_{u_1}\delta_{u_2}\delta_{u_3}
-2\delta_{u_1}\delta_{u_2}-2\delta_{u_1}\delta_{u_3}-2\delta_{u_2}\delta_{u_3}\label{5.16} \\
&\qquad +\delta_{u_1}+\delta_{u_2}+\delta_{u_3}.\nonumber
\end{align}
In general, using the notation $\delta_J$, with the same meaning as in
\eqref{3.12}, where again $J\subseteq\{1,\ldots,k\}$, we have the following
result.

\begin{lemma}
For even $k\geq 2$ we have
\begin{equation}\label{5.17}
\delta_{u_1+\dots+u_k} = 1-\sum_{j=1}^k\sum_{|J|=j}(-2)^{j-1}\delta_J,
\end{equation}
and for odd $k\geq 1$,
\begin{equation}\label{5.18}
\delta_{u_1+\dots+u_k} = \sum_{j=1}^k\sum_{|J|=j}(-2)^{j-1}\delta_J.
\end{equation}
\end{lemma}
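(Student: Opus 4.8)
The plan is to mirror the shift-operator argument used for Lemma~1, which makes the sign pattern transparent. Writing $\partial=\frac{d}{dx}$ and using $f(x+u)=e^{u\partial}f(x)$, the defining relation \eqref{5.13} shows that, as operators, $\delta_u=\tfrac12\bigl(1+e^{u\partial}\bigr)$; this is the averaging counterpart of the relation $\Delta_u=e^{u\partial}-1$ exploited in the proof of Lemma~1. Inverting gives $e^{u\partial}=2\delta_u-1$, and since shift operators are multiplicative, $e^{(u_1+\dots+u_k)\partial}=\prod_{j=1}^k e^{u_j\partial}=\prod_{j=1}^k(2\delta_{u_j}-1)$. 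First I would therefore rewrite the left-hand side as
\[
\delta_{u_1+\dots+u_k}=\tfrac12\Bigl(1+\prod_{j=1}^k(2\delta_{u_j}-1)\Bigr).
\]

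Next I would expand the product over subsets of $\{1,\dots,k\}$, grouping the factors according to which contribute $2\delta_{u_j}$ and which contribute the constant $-1$. With the notation $\delta_J=\prod_{j\in J}\delta_{u_j}$ and $\delta_\emptyset=1$, this gives $\prod_{j=1}^k(2\delta_{u_j}-1)=\sum_{J}2^{|J|}(-1)^{k-|J|}\delta_J$, where the operators commute so no ordering issue arises. Separating the term $J=\emptyset$, which equals $(-1)^k$, and writing $2^{j}(-1)^{k-j}=(-1)^k(-2)^{j}$ for $|J|=j\geq1$, I would arrive at
\[
\delta_{u_1+\dots+u_k}=\tfrac12\bigl(1+(-1)^k\bigr)+\tfrac{(-1)^k}{2}\sum_{j=1}^k\sum_{|J|=j}(-2)^{j}\delta_J.
\]

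The argument then splits by the parity of $k$. When $k$ is even, $(-1)^k=1$, so the leading constant is $1$ and $\tfrac12(-2)^{j}=-(-2)^{j-1}$ turns the double sum into $-\sum_{j}\sum_{|J|=j}(-2)^{j-1}\delta_J$, which is \eqref{5.17}. When $k$ is odd, $(-1)^k=-1$, the constant $\tfrac12(1+(-1)^k)$ vanishes, and $-\tfrac12(-2)^{j}=(-2)^{j-1}$ yields \eqref{5.18}. As an alternative I could instead induct on $k$ using \eqref{5.15} in the form $\delta_{u+u'}=2\delta_u\delta_{u'}-\delta_u-\delta_{u'}+1$ to pass from the sum of the first $k$ parameters to $k+1$, but the operator identity settles both cases in one expansion.

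I do not expect a genuine obstacle here, since the content is just the algebraic expansion of a product of commuting operators. The only point requiring care is the sign bookkeeping. In particular, the isolated constant $1$ present in \eqref{5.17} and its disappearance in \eqref{5.18} both originate from the empty-subset term $(-1)^k$ combining with the explicit summand $1$ inside the prefactor $\tfrac12(1+\cdots)$; one must verify that the surviving factor $(-1)^k$ correctly converts each $2^{j}$ into the stated $(-2)^{j-1}$, with the overall sign flipping between the two parity classes.
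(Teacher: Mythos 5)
Your proof is correct, but it follows a genuinely different route from the paper's. The paper disposes of this lemma in one line: ``These identities can be proved by straightforward induction, with \eqref{5.15} as induction beginning'' --- i.e.\ exactly the inductive alternative you mention only in passing at the end. Your main argument instead transposes to $\delta_u$ the ``more formal'' shift-operator proof that the paper gives for Lemma~1: the identity $\delta_u=\tfrac12\bigl(1+e^{u\partial}\bigr)$, hence $e^{u\partial}=2\delta_u-1$, is right, the operators all commute, and the subset expansion
\[
\delta_{u_1+\dots+u_k}=\tfrac12\Bigl(1+\prod_{j=1}^k\bigl(2\delta_{u_j}-1\bigr)\Bigr)
=\tfrac12\bigl(1+(-1)^k\bigr)+\tfrac{(-1)^k}{2}\sum_{j=1}^k\sum_{|J|=j}(-2)^{j}\delta_J
\]
is valid, with your sign bookkeeping checking out in both parity cases (it reproduces \eqref{5.15} and \eqref{5.16} for $k=2,3$, and the conversions $\tfrac12(-2)^j=-(-2)^{j-1}$ and $-\tfrac12(-2)^j=(-2)^{j-1}$ yield \eqref{5.17} and \eqref{5.18} exactly). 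What your approach buys: a single closed-form computation that handles both parities at once and explains structurally why the stray constant $1$ is present precisely for even $k$ --- it is the empty-subset term $(-1)^k$ combining with the explicit $1$ in the prefactor. What the paper's induction buys: brevity, since \eqref{5.15} is already on the page, at the cost of having to track separately how the constant term and the sign pattern flip when passing from even $k$ to odd $k+1$ and back; your derivation makes that alternation automatic rather than something to be verified case by case.
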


These identities can be proved by straightforward induction, with \eqref{5.15}
as induction beginning. Using notation from \eqref{3.14}, we now
obtain the following result.

\begin{lemma}
Let $u_1+\dots+u_k=1$. Then for even $k\geq 2$ we have
\begin{equation}\label{5.19}
(n+1)\left(x+u_1\mathcal{E}_1+\dots+u_k\mathcal{E}_k\right)^n
=\sum_{j=1}^k(-2)^j\sum_{|J|=j}
\left(x+\mathcal{B}+(u\mathcal{E})_{\overline{J}}\right)^{n+1},
\end{equation}
and for odd $k\geq 1$,
\begin{equation}\label{5.20}
\left(x+u_1\mathcal{E}_1+\dots+u_k\mathcal{E}_k\right)^n
=\sum_{j=1}^k(-2)^{j-1}\sum_{|J|=j}
\left(x+\mathcal{E}_0+(u\mathcal{E})_{\overline{J}}\right)^n,
\end{equation}
where $\mathcal{E}_0,\dots,\mathcal{E}_k$ are independent Euler symbols.
\end{lemma}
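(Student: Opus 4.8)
The plan is to follow the proof of Lemma~2 as closely as possible: apply the operator identities \eqref{5.17} and \eqref{5.18} to suitably chosen powers and then simplify using the annihilation relations \eqref{5.11}--\eqref{5.12} together with \eqref{5.14}. The two parities require slightly different input functions, because \eqref{5.17} carries an extra additive ``$1$'' that \eqref{5.18} does not.

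For the odd case \eqref{5.20} I would apply \eqref{5.18} to $f(x)=\left(x+\mathcal{E}_0+u_1\mathcal{E}_1+\dots+u_k\mathcal{E}_k\right)^n$, where $\mathcal{E}_0,\dots,\mathcal{E}_k$ are independent Euler symbols. Since $u_1+\dots+u_k=1$, \eqref{5.14} gives $\delta_{u_1+\dots+u_k}f(x)=\delta_1 f(x)=f(x+\mathcal{V})$, and the unscaled symbol $\mathcal{V}$ annihilates the coefficient-one term $\mathcal{E}_0$ by \eqref{5.11}, leaving $\left(x+u_1\mathcal{E}_1+\dots+u_k\mathcal{E}_k\right)^n$, which is the left-hand side of \eqref{5.20}. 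For a single factor, $\delta_{u_i}f(x)=f(x+u_i\mathcal{V})$, and grouping $u_i\mathcal{V}$ with $u_i\mathcal{E}_i$ and rescaling \eqref{5.11} shows that $u_i\mathcal{V}$ annihilates exactly the term $u_i\mathcal{E}_i$; iterating yields $\delta_J f(x)=\left(x+\mathcal{E}_0+(u\mathcal{E})_{\overline{J}}\right)^n$. Substituting these into \eqref{5.18} gives \eqref{5.20} at once.

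The even case \eqref{5.19} is the main obstacle, precisely because of the leading ``$1$'' in \eqref{5.17}. Rewriting that identity as $\sum_{j=1}^k\sum_{|J|=j}(-2)^{j-1}\delta_J=1-\delta_1$ and observing that $(1-\delta_1)h(x)=\tfrac12\bigl(h(x)-h(x+1)\bigr)=-\tfrac12\Delta_1 h(x)$, I would invoke \eqref{3.8} with $u=1$, namely $\Delta_1 h(x)=h'(x+\mathcal{U})$, to pass from the Euler/discrete-uniform world into the Bernoulli/uniform world. This crossover between the two annihilating pairs $(\mathcal{E},\mathcal{V})$ and $(\mathcal{B},\mathcal{U})$ is exactly what forces a Bernoulli symbol onto the right of \eqref{5.19}. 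Concretely, I would apply the rearranged identity to $h(x)=\tfrac{1}{n+1}\left(x+\mathcal{B}_0+u_1\mathcal{E}_1+\dots+u_k\mathcal{E}_k\right)^{n+1}$ with $\mathcal{B}_0$ a Bernoulli symbol. Then $h'(x+\mathcal{U})=\left(x+\mathcal{U}+\mathcal{B}_0+u_1\mathcal{E}_1+\dots+u_k\mathcal{E}_k\right)^n$, and $\mathcal{U}$ annihilates $\mathcal{B}_0$ by \eqref{3.6}, so the left side collapses to $-\tfrac12\left(x+u_1\mathcal{E}_1+\dots+u_k\mathcal{E}_k\right)^n$. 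On the right, $\delta_{u_i}$ still annihilates $u_i\mathcal{E}_i$ but leaves $\mathcal{B}_0$ untouched, since $\mathcal{V}$ annihilates only Euler symbols, giving $\delta_J h(x)=\tfrac{1}{n+1}\left(x+\mathcal{B}_0+(u\mathcal{E})_{\overline{J}}\right)^{n+1}$.

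To finish, I would equate the two sides of the rearranged identity and multiply through by $-2(n+1)$: this converts each $(-2)^{j-1}$ into $(-2)^{j}$ and produces the factor $n+1$ on the left, yielding exactly \eqref{5.19} with $\mathcal{B}=\mathcal{B}_0$. The only genuinely delicate step is the even case; once one recognizes that the stray ``$1$'' is $-\tfrac12\Delta_1$ and that $\Delta_1$ is governed by the uniform symbol $\mathcal{U}$ annihilating a Bernoulli symbol, the appearance of $\mathcal{B}$, of the shifted degree $n+1$, and of the factor $n+1$ in \eqref{5.19} all become inevitable, and the remaining bookkeeping is routine.
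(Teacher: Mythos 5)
Your proof is correct and follows essentially the same route as the paper: for odd $k$ you apply \eqref{5.18} to $\left(x+\mathcal{E}_0+u_1\mathcal{E}_1+\dots+u_k\mathcal{E}_k\right)^n$ exactly as the paper does, and for even $k$ your rearrangement $\sum_{j=1}^k\sum_{|J|=j}(-2)^{j-1}\delta_J=1-\delta_1=-\tfrac12\Delta_1$ applied to $\tfrac{1}{n+1}\left(x+\mathcal{B}_0+u_1\mathcal{E}_1+\dots+u_k\mathcal{E}_k\right)^{n+1}$ is just an algebraic repackaging of the paper's step, which folds $\Delta_u=2\delta_u-2$ into \eqref{5.17} to get $\Delta_{u_1+\dots+u_k}=\sum_{j=1}^k(-2)^j\sum_{|J|=j}\delta_J$ and then applies it to the unnormalized power. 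The key ideas --- the crossover to the Bernoulli/uniform pair via \eqref{3.8}, the matching-coefficient annihilations, and the resulting degree shift and factor $n+1$ --- coincide with the paper's proof.
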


\begin{proof}
Comparing \eqref{5.13} with \eqref{3.9}, we get the operator identity
$\Delta_u = 2\delta_u-2$,
and thus for even $k\geq 2$ we have with \eqref{5.17},
\begin{equation}\label{5.22}
\Delta_{u_1+\dots+u_k} = \sum_{j=1}^k(-2)^j\sum_{|J|=j}\delta_J.
\end{equation}
Since $u_1+\dots u_k=1$, we have by \eqref{3.16},
\begin{equation}\label{5.23}
\Delta_{u_1+\dots+u_k}f(x) = f'(x+\mathcal{U}). 
\end{equation}
Now let 
\[
f(x):=\left(x+\mathcal{B}+u_1\mathcal{E}_1+\dots+u_k\mathcal{E}_k\right)^{n+1},
\]
and apply \eqref{5.22} to this function. On the left-hand side, using 
\eqref{5.23}, the symbols $\mathcal{U}$ and $\mathcal{B}$ cancel each other,
and we get the left-hand side of \eqref{5.19}. To obtain the right-hand side,
we first note that for any $i=1,\ldots,k$ we have by \eqref{5.14},
\begin{equation}\label{5.24}
\delta_{u_i}f(x) 
=\left(x+\mathcal{B}+(u\mathcal{E})_{\{1,\ldots,k\}\setminus\{i\}}\right)^{n+1},
\end{equation}
having used the notation in \eqref{3.14} and the fact that the symbols 
$\mathcal{V}$ and $\mathcal{E}_i$ cancel each other. As in
\eqref{3.17}, the coefficients $u_i$ have to match for this cancellation to
apply. Successively applying \eqref{5.24} and using the notation \eqref{3.12},
we get
\[
\delta_Jf(x) = \left(x+\mathcal{B}+(u\mathcal{E})_{\overline{J}}\right)^{n+1}.
\]
This, combined with \eqref{5.22}, completes the proof of \eqref{5.19}.

The proof of \eqref{5.20} is very similar: Instead of \eqref{5.22} use
\eqref{5.18} and apply it to 
\[
f(x):=\left(x+\mathcal{E}_0+u_1\mathcal{E}_1+\dots+u_k\mathcal{E}_k\right)^n.
\]
While the right-hand side is evaluated as before, for the left-hand side we
use \eqref{5.14} with $u=1$.
\end{proof}

{\bf 4.} We are now ready to state and prove the main result of this section.

\begin{theorem}
Let $n\geq 0$ and $k\geq $ be integers and $a_1,\ldots,a_k$ positive 
parameters. Then for even $k\geq 2$ we have
\begin{gather}
\sum_{l_1+\dots+l_k=n}\binom{n}{l_1,\ldots,l_k}
\frac{(a_1)_{l_1}\dots(a_k)_{l_k}}{(a_1+\dots+a_k)_n}E_{l_1}(x)\dots E_{l_k}(x)
=\sum_{j=1}^k\frac{(-2)^j}{n+1}\label{5.25} \\
\quad\times\sum_{|J|=j}\sum_{\substack{l_0+l_1+\dots\\+l_{k-j}=n+1}}\binom{n+1}{l_0,\ldots,l_{k-j}}
\frac{(a_{i_{j+1}})_{l_1}\dots(a_{i_k})_{l_{k-j}}}{(a_1+\dots+a_k)_{n+1-l_0}}
B_{l_0}(x)E_{l_1}(0)\dots E_{l_{k-j}}(0),\nonumber
\end{gather}
and for odd $k\geq 1$,
\begin{gather}
\sum_{l_1+\dots+l_k=n}\binom{n}{l_1,\ldots,l_k}
\frac{(a_1)_{l_1}\dots(a_k)_{l_k}}{(a_1+\dots+a_k)_n}E_{l_1}(x)\dots E_{l_k}(x)
=\sum_{j=1}^k(-2)^{j-1}\label{5.26} \\
\quad\times\sum_{|J|=j}\sum_{\substack{l_0+l_1+\dots\\+l_{k-j}=n}}\binom{n}{l_0,\ldots,l_{k-j}}
\frac{(a_{i_{j+1}})_{l_1}\dots(a_{i_k})_{l_{k-j}}}{(a_1+\dots+a_k)_{n-l_0}}
E_{l_0}(x)E_{l_1}(0)\dots E_{l_{k-j}}(0).\nonumber
\end{gather}
\end{theorem}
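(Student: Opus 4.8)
The plan is to mirror the proof of Theorem~2, replacing the Bernoulli symbolic identity \eqref{3.15} with its Euler analogues \eqref{5.19} and \eqref{5.20} from Lemma~4, and then taking the expectation over the same Dirichlet-distributed coefficients. Since Lemma~4 distinguishes between even and odd $k$, the argument splits into two parallel cases, yielding \eqref{5.25} and \eqref{5.26} respectively. First I would choose $u_1,\ldots,u_k$ as in \eqref{4.6}, so that $u_1+\cdots+u_k=1$ and the hypotheses of Lemma~4 are met. On the left-hand side of both \eqref{5.19} and \eqref{5.20}, I would rewrite the power using $u_1+\cdots+u_k=1$ as $\bigl(u_1(x+\mathcal{E}_1)+\cdots+u_k(x+\mathcal{E}_k)\bigr)^n$ and expand by the multinomial theorem, applying \eqref{5.8} to obtain
\[
\sum_{l_1+\cdots+l_k=n}\binom{n}{l_1,\ldots,l_k}u_1^{l_1}\cdots u_k^{l_k}E_{l_1}(x)\cdots E_{l_k}(x).
\]

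Next I would expand the right-hand sides term by term. In the even case \eqref{5.19} I would combine $x+\mathcal{B}$ into a single factor and apply the multinomial theorem to $\bigl(x+\mathcal{B}+(u\mathcal{E})_{\overline{J}}\bigr)^{n+1}$; by \eqref{3.1a} this produces $B_{l_0}(x)$, while \eqref{5.7} turns each power of an Euler symbol into a value $E_l(0)$, giving
\[
\sum_{l_0+\cdots+l_{k-j}=n+1}\binom{n+1}{l_0,\ldots,l_{k-j}}u_{i_{j+1}}^{l_1}\cdots u_{i_k}^{l_{k-j}}B_{l_0}(x)E_{l_1}(0)\cdots E_{l_{k-j}}(0).
\]
The odd case \eqref{5.20} is handled identically, except that $x+\mathcal{E}_0$ is combined into the single factor, yielding an Euler polynomial $E_{l_0}(x)$ and a total degree $n$ in place of $n+1$.

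Finally I would take the expectation of both sides and invoke \eqref{4.10}. On the left this produces the factor $(a_1)_{l_1}\cdots(a_k)_{l_k}/(a_1+\cdots+a_k)_n$, as desired. On the right, the decisive point---and the one place where the Euler argument genuinely departs from the Bernoulli one---is that the discrete averaging operator $\delta_{u_i}$ in \eqref{5.24} introduces \emph{no} factor $u_i$, in sharp contrast to the factor $u_i$ produced by $\Delta_{u_i}$ in \eqref{3.17}. Consequently the only powers of the $u$'s surviving on the right-hand side are $u_{i_{j+1}}^{l_1}\cdots u_{i_k}^{l_{k-j}}$, and \eqref{4.10} (with vanishing exponents on the indices in $J$, for which $(a)_0=1$) evaluates their expectation to $(a_{i_{j+1}})_{l_1}\cdots(a_{i_k})_{l_{k-j}}/(a_1+\cdots+a_k)_{l_1+\cdots+l_{k-j}}$. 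Since $l_1+\cdots+l_{k-j}=n+1-l_0$ in the even case and $l_1+\cdots+l_{k-j}=n-l_0$ in the odd case, these denominators match exactly those appearing in \eqref{5.25} and \eqref{5.26}.

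The main obstacle is therefore not any single hard computation but the parity-dependent bookkeeping: correctly propagating the signs $(-2)^j$ versus $(-2)^{j-1}$ and the degree shift $n+1$ versus $n$ through the two expansions, keeping the mixed Bernoulli/Euler structure of the even case straight, and---most importantly---resisting the temptation to insert a spurious $a_J$ factor by false analogy with Theorem~2, since the absence of that factor is precisely what the averaging identity \eqref{5.24} guarantees.
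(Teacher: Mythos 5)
Your proposal is correct and follows essentially the same route as the paper's own proof: expand both sides of \eqref{5.19} and \eqref{5.20} by the multinomial theorem (using \eqref{5.8}, \eqref{3.1a} and \eqref{5.7}), with $u_1,\ldots,u_k$ Dirichlet-distributed as in \eqref{4.6}, then take expectations via \eqref{4.10}. Your observation that no $a_J$ factor appears---because $\delta_{u_i}$ in \eqref{5.24}, unlike $\Delta_{u_i}$ in \eqref{3.17}, introduces no factor $u_i$---is exactly right and is the point the paper handles implicitly by referring back to the proof of Theorem~2.
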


For $k=2$, the identity \eqref{5.25} reduces to Theorem~3.
In the special case $a_1=\dots=a_k=1$, the identities \eqref{5.25} and 
\eqref{5.26} reduce to Theorems~2 and~3, respectively, in \cite{AD}. Other
special cases can be found in Section~6.

\begin{proof}[Proof of Theorem~4]
The proof is almost identical to that of Theorem~2: We expand the powers on
both sides of \eqref{5.19} and \eqref{5.20} using the multinomial theorem,
and then compute the expectation on both sides by way of \eqref{4.10}, having
chosen $u_1,\ldots,u_k$ as in \eqref{4.6}.

The left-hand sides of \eqref{5.25} and \eqref{5.26} are obtained just as in
the expansion \eqref{4.11}, with Euler instead of Bernoulli symbols and 
polynomials, and having used \eqref{5.8} in place of \eqref{3.1a}. The
right-hand sides are expanded as in \eqref{4.12}, with the appropriate
exponent and with ``Bernoulli" replaced by ``Euler" where appropriate.
Applying \eqref{4.10} then completes the proofs of both identities.
\end{proof}

\section{Some further identities}

In this final section we state and prove some further consequences of our 
main results from Sections~2, 4 and 5, respectively.

\subsection{Consequences of Theorem~1}

In addition to the two identities in Corollary~4,
we can obtain one more consequence of Corollary~3 by multiplying both sides of
\eqref{2.9} by $a$ and then taking the limit as $a\rightarrow\infty$. Then all
the terms in the sum on the left disappear, with the exception of the $l=n-1$
term. On the right, the fraction in the sum tends to 1 for $l\geq 2$, and to 2
for $l=1$. Putting everything together, we get the following consequence.

\begin{corollary}
For integers $n\geq 1$ we have
\begin{equation}\label{2.13}
\sum_{l=0}^n\binom{n}{l}B_lB_{n-l}(x) = n(x-1)B_{n-1}(x)-(n-1)B_n(x).
\end{equation}
\end{corollary}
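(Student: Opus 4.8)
The plan is to follow the recipe indicated just before the statement: multiply both sides of Corollary~3, equation \eqref{2.9}, by $a$ and let $a\to\infty$, tracking the asymptotics of each Pochhammer ratio. The key elementary fact is that $(a)_m\sim a^m$ as $a\to\infty$, so that $a\,(a)_l/(a)_n\sim a^{1+l-n}$; on the left-hand side of \eqref{2.9} this forces every term with $l<n-1$ to vanish in the limit, leaving only $l=n-1$. Since $\binom{n}{n-1}=n$, $(n-l-1)!=0!=1$, and $a\,(a)_{n-1}/(a)_n=a/(a+n-1)\to1$, the left side tends to $n\,B_{n-1}(x)B_1(x)=n(x-\tfrac12)B_{n-1}(x)$.

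For the right-hand side I would handle the sum term by term. For $l\ge2$ the dominant part of $a(l-1)!+(a)_l$ is $(a)_l$, and $a\,(a)_l/(a)_{l+1}=a/(a+l)\to1$, while the $a(l-1)!$ contribution is $O(a^{1-l})\to0$; hence each such term contributes $\binom{n}{l}B_lB_{n-l}(x)$. The term $l=1$ must be treated separately: there $a(l-1)!+(a)_l=2a$ and $(a)_2=a(a+1)$, so $a\cdot 2a/[a(a+1)]\to2$, giving $2\binom{n}{1}B_1B_{n-1}(x)=-nB_{n-1}(x)$ (using $B_1=-\tfrac12$). The two remaining right-hand terms give $a\cdot\frac{n}{a+1}B_{n-1}(x)\to nB_{n-1}(x)$ and, since $H_{a,n}=\sum_{j=0}^{n-1}(j+a)^{-1}\sim n/a$, also $a\,H_{a,n}B_n(x)\to nB_n(x)$.

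The final—and most error-prone—step is the bookkeeping. Writing $S:=\sum_{l=0}^n\binom{n}{l}B_lB_{n-l}(x)$ for the quantity in \eqref{2.13}, I would reconstitute $S$ from the surviving $l\ge2$ sum by adding back the $l=0$ term $B_n(x)$ and the $l=1$ term $-\tfrac{n}{2}B_{n-1}(x)$. Collecting all limits yields $n(x-\tfrac12)B_{n-1}(x)=S+(n-1)B_n(x)+\tfrac{n}{2}B_{n-1}(x)$, which rearranges at once to \eqref{2.13}. The main obstacle is purely this careful accounting of which terms survive and the separate analysis of $l=1$; the asymptotic estimates themselves are routine.

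Alternatively, one could bypass \eqref{2.9} entirely and prove \eqref{2.13} directly via the symbolic calculus of Section~3: by \eqref{3.1a} and the binomial theorem the left side equals $(x+\mathcal B_1+\mathcal B_2)^n$, a second-order Bernoulli polynomial, and a short generating-function computation with $\bigl(z/(e^z-1)\bigr)^2e^{xz}$ recovers the right side. I would nonetheless keep the limiting argument as the primary proof, since it exhibits \eqref{2.13} as the $a\to\infty$ boundary case of the one-parameter family in Corollary~3.
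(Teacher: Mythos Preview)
Your proposal is correct and follows exactly the approach sketched in the paper: multiply \eqref{2.9} by $a$, let $a\to\infty$, observe that only the $l=n-1$ term survives on the left while the right-hand fraction tends to $1$ for $l\ge 2$ and to $2$ for $l=1$, and then reassemble $S$. Your bookkeeping is accurate and yields \eqref{2.13} as claimed.
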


For $x=0$ this is Euler's identity \eqref{1.2}, but it is also a special case
of identity (5.11.2) in \cite{Ha}.

We can obtain even more consequences from Theorem~1 by setting 
$a=b=\varepsilon$ and then taking the limit as $\varepsilon\rightarrow\infty$,
or by considering the terms in \eqref{2.5} as power series in $\varepsilon$.
We begin with the first case.

\begin{corollary}
For integers $n\geq 1$ we have
\begin{equation}\label{2.14}
\sum_{l=0}^n\binom{n}{l}\frac{1}{2^l}B_lB_{n-l}(x) 
= \frac{n}{2^n}(2x-1)B_{n-1}(2x)-\frac{n-1}{2^n}B_n(2x)-\frac{n}{4}B_{n-1}(x).
\end{equation}
\end{corollary}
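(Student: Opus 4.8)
The plan is to start from Theorem~1, specifically identity \eqref{2.5}, and set $a=b=\varepsilon$, then take the limit as $\varepsilon\to\infty$. With this substitution, the three Pochhammer-based coefficients on both sides become explicit in $\varepsilon$: on the left, $\frac{(\varepsilon)_l(\varepsilon)_{n-l}}{(2\varepsilon)_n}$, and on the right, $\frac{\varepsilon(\varepsilon)_l+\varepsilon(\varepsilon)_l}{(2\varepsilon)_{l+1}}=\frac{2\varepsilon(\varepsilon)_l}{(2\varepsilon)_{l+1}}$, together with the factor $\frac{\varepsilon^2}{(2\varepsilon+1)(2\varepsilon)}$ multiplying $nB_{n-1}(x)$. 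The first thing I would do is determine the leading-order behaviour of each coefficient as $\varepsilon\to\infty$.

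The key observation is that $(\varepsilon)_m=\varepsilon(\varepsilon+1)\cdots(\varepsilon+m-1)\sim\varepsilon^m$ to leading order, so the left-hand coefficient behaves like $\frac{\varepsilon^l\varepsilon^{n-l}}{(2\varepsilon)^n}=2^{-n}$, but this crude estimate loses the polynomial structure in $x$. I expect that the correct way to extract a clean closed form is not to take a naive leading term but to recognize that $\frac{(\varepsilon)_l(\varepsilon)_{n-l}}{(2\varepsilon)_n}$, as $\varepsilon\to\infty$, converges to a specific value, while the products of Bernoulli polynomials combine via the translation structure $B_m(x)=(x+\mathcal B)^m$. The natural mechanism producing $B_{n-1}(2x)$ and $B_n(2x)$ on the right-hand side is the multiplication-type formula: a convolution $\sum_l\binom{n}{l}2^{-n}B_l(x)B_{n-l}(x)$-type object reassembles into a Bernoulli polynomial evaluated at $2x$ via $(x+\mathcal B_1)+(x+\mathcal B_2)\leftrightarrow 2x+\mathcal B$ once one accounts for the annihilation relation. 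So I would first compute the precise limit of each $\varepsilon$-coefficient (keeping the next-order correction where needed, since the $-\frac{n}{4}B_{n-1}(x)$ term must come from a subleading contribution), and then use the symbolic calculus of Section~3 to collapse the resulting sums into polynomials in $2x$.

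Concretely, the steps are: (i) expand $\frac{(\varepsilon)_l(\varepsilon)_{n-l}}{(2\varepsilon)_n}=\prod_{i=0}^{l-1}(\varepsilon+i)\prod_{i=0}^{n-l-1}(\varepsilon+i)\big/\prod_{i=0}^{n-1}(2\varepsilon+i)$ and find its limit together with the $1/\varepsilon$ correction; (ii) do the same for the right-hand coefficient $\frac{2\varepsilon(\varepsilon)_l}{(2\varepsilon)_{l+1}}$; (iii) note that the $\frac{\varepsilon^2}{(2\varepsilon+1)(2\varepsilon)}nB_{n-1}(x)$ term tends to $\frac14 nB_{n-1}(x)$, which I expect to supply precisely the $-\frac{n}{4}B_{n-1}(x)$ on the right of \eqref{2.14} after it is moved across; (iv) apply the symbolic identity \eqref{3.1a} to rewrite the surviving convolution of $B_l(x)B_{n-l}(x)$ as a polynomial in $2x$, which is what generates the $B_{n-1}(2x)$ and $B_n(2x)$ terms via a formula of the type already used to produce Corollary~5 (i.e., Euler's relation \eqref{2.13} applied at argument $2x$ with an appropriate scaling). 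The cleanest route for step (iv) is likely to reduce the limiting left-hand side to $2^{-n}\sum_l\binom{n}{l}B_l(x)B_{n-l}(x)$-free form and recognize the combination $n(2x-1)B_{n-1}(2x)-(n-1)B_n(2x)$ as exactly the right-hand side of Euler's identity \eqref{2.13} evaluated at $2x$.

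The main obstacle will be step (iii) combined with the bookkeeping of subleading terms: because the target identity \eqref{2.14} contains the lower-order piece $-\frac{n}{4}B_{n-1}(x)$ in addition to the $2^{-n}$-scaled terms at argument $2x$, a purely leading-order limit will not suffice. I must track which terms in the $\varepsilon$-expansion survive and verify that the $\varepsilon^2/((2\varepsilon+1)(2\varepsilon))$ factor contributes the correct constant $\frac14$ in the limit while the two sums conspire to produce the $2x$-argument Bernoulli polynomials with the stated coefficients $n(2x-1)/2^n$ and $-(n-1)/2^n$. Verifying that no additional $O(1)$ terms are spuriously generated—i.e., that the limit is taken rigorously rather than term-by-term without justification—is the delicate point; a power-series-in-$\varepsilon$ viewpoint, as the surrounding text suggests, makes this transparent since one simply reads off the constant term of each rational function of $\varepsilon$ after the appropriate normalization.
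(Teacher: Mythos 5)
Your proposal is correct and takes essentially the same route as the paper: set $a=b=\varepsilon$ in \eqref{2.5} and let $\varepsilon\to\infty$. Two points of comparison. First, your worry that ``a purely leading-order limit will not suffice'' is unfounded, and in fact contradicts your own step (iii): all three coefficients converge to finite nonzero constants,
\[
\frac{(\varepsilon)_l(\varepsilon)_{n-l}}{(2\varepsilon)_n}\to\frac{1}{2^n},\qquad
\frac{2\varepsilon(\varepsilon)_l}{(2\varepsilon)_{l+1}}\to\frac{1}{2^l},\qquad
\frac{\varepsilon^2}{(2\varepsilon+1)(2\varepsilon)}\to\frac{1}{4},
\]
and since both sums are finite, passing to the limit term by term is immediate and yields
\[
\frac{1}{2^n}\sum_{l=0}^n\binom{n}{l}B_l(x)B_{n-l}(x)
=\sum_{l=0}^n\binom{n}{l}\frac{1}{2^l}B_lB_{n-l}(x)+\frac{n}{4}B_{n-1}(x),
\]
with the $-\tfrac{n}{4}B_{n-1}(x)$ in \eqref{2.14} coming entirely from the last limit; no $1/\varepsilon$ corrections are needed anywhere (such expansions matter only for results like Corollary~7, where one works near $\varepsilon=0$ and equates coefficients of $\varepsilon$). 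Second, for the remaining evaluation the paper simply quotes Hansen's table, identity (50.11.2),
\[
\sum_{l=0}^n\binom{n}{l}B_l(x)B_{n-l}(x)=n(2x-1)B_{n-1}(2x)-(n-1)B_n(2x),
\]
whereas your step (iv) derives it from the symbolic calculus: indeed
$\sum_{l}\binom{n}{l}(x+\mathcal{B}_1)^l(x+\mathcal{B}_2)^{n-l}
=\left(2x+\mathcal{B}_1+\mathcal{B}_2\right)^n
=\sum_{l}\binom{n}{l}B_lB_{n-l}(2x)$,
and then Corollary~5, i.e.\ \eqref{2.13} evaluated at argument $2x$, gives exactly the displayed right-hand side. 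This makes the proof self-contained, a small gain over the paper's citation; just be careful to phrase the symbolic step as independence plus re-expansion of the multinomial, since $\mathcal{B}_1+\mathcal{B}_2$ is not itself a Bernoulli symbol and no annihilation by $\mathcal{U}$ is involved at that point.
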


\begin{proof}
With $a=b=\varepsilon$, the following limits are obvious:
\[
\lim_{\varepsilon\rightarrow\infty}\frac{(\varepsilon)_l(\varepsilon)_{n-l}}{(2\varepsilon)_n}=\frac{1}{2^n},\qquad
\lim_{\varepsilon\rightarrow\infty}\frac{2\varepsilon(\varepsilon)_l}{(2\varepsilon)_{l+1}}=\frac{1}{2^l},\qquad
\lim_{\varepsilon\rightarrow\infty}\frac{\varepsilon^2}{(2\varepsilon+1)(2\varepsilon)}=\frac{1}{4}.
\]
Hence we have
\begin{equation}\label{2.15}
\frac{1}{2^n}\sum_{l=0}^n\binom{n}{l}B_l(x)B_{n-l}(x)
=\sum_{l=0}^n\binom{n}{l}\frac{1}{2^l}B_lB_{n-l}(x)+\frac{n}{4}B_{n-1}(x).
\end{equation}
The sum on the left has a well-known evaluation (see (50.11.2) in \cite{Ha}):
\[
\sum_{l=0}^n\binom{n}{l}B_l(x)B_{n-l}(x)=n(2x-1)B_{n-1}(2x)-(n-1)B_n(2x).
\]
This, with \eqref{2.15}, immediately gives \eqref{2.14}.
\end{proof}

We note that \eqref{2.15} can also be obtained as a special case of identity
(6.1) in \cite{AD}. For the next statement, again using 
$a=b=\varepsilon$, we need the {\it second-order harmonic numbers\/}, defined
by $H_0^{(2)}:=0$ and 
\[
H_n^{(2)}:=\sum_{j=1}^n\frac{1}{j^2}\qquad(n\geq 1).
\]
We can now prove the following result.

\begin{corollary}
For integers $n\geq 1$ we have
\begin{align}
\sum_{l=1}^{n-1}\left(H_{n-1}-H_{l-1}\right)
\frac{B_l(x)}{l}&\frac{B_{n-l}(x)}{n-l}
=\sum_{l=1}^{n}\binom{n}{l}\left(H_l+\frac{1}{l}\right)\frac{B_l}{l}B_{n-l}(x)\label{2.16} \\
&+nB_{n-1}(x)+\frac{1}{2}\left(H_{n-1}^2+3H_{n-1}^{(2)}\right)B_n(x).\nonumber
\end{align}
\end{corollary}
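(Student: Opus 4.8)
The plan is to specialize Theorem~1 by setting $a=b=\varepsilon$ in \eqref{2.5} and to regard both sides as power series in $\varepsilon$ about $\varepsilon=0$, matching coefficients. This is the second of the two methods indicated above; the first, taking $\varepsilon\to\infty$, produced \eqref{2.14}. The crucial observation is that the harmonic numbers arise from the behaviour of the Pochhammer symbols as $\varepsilon\to 0$: for $l\geq 1$,
\[
(\varepsilon)_l=\varepsilon\,(l-1)!\left(1+\varepsilon H_{l-1}+\tfrac12\varepsilon^2\bigl(H_{l-1}^2-H_{l-1}^{(2)}\bigr)+O(\varepsilon^3)\right),
\]
and analogously for $(\varepsilon)_{n-l}$, $(2\varepsilon)_n$ and $(2\varepsilon)_{l+1}$ (with $\varepsilon$ replaced by $2\varepsilon$ in the last two). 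Thus first- and second-order harmonic numbers enter exactly at orders $\varepsilon^1$ and $\varepsilon^2$, which is why they appear in the statement.

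First I would expand every coefficient in \eqref{2.5} to order $\varepsilon^2$. On the left, the end terms $l=0$ and $l=n$ are $O(1)$ and, after combining, contribute $\bigl(1-\varepsilon H_{n-1}+(2h^2-3e)\varepsilon^2\bigr)B_n(x)$ with $h=H_{n-1}$ and $e=\tfrac12(H_{n-1}^2-H_{n-1}^{(2)})$, while each middle term $1\le l\le n-1$ is $O(\varepsilon)$ with leading coefficient $\tfrac{n}{2l(n-l)}$ and a first correction governed by $H_{l-1}+H_{n-l-1}-2H_{n-1}$. On the right, the $l=0$ term of the first sum equals $B_n(x)$ identically, the terms $l\ge 1$ expand as $\tfrac{\varepsilon}{l}-\tfrac{\varepsilon^2}{l}\bigl(H_l+\tfrac1l\bigr)+O(\varepsilon^3)$ after using $H_{l-1}-2H_l=-(H_l+\tfrac1l)$, and the final term of \eqref{2.5} becomes $n\bigl(\tfrac{\varepsilon}{2}-\varepsilon^2+O(\varepsilon^3)\bigr)B_{n-1}(x)$.

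Next I would match coefficients order by order. The $\varepsilon^0$ identity is the trivial $B_n(x)=B_n(x)$, and the $\varepsilon^1$ identity reproduces \eqref{2.12} (the case $a=1$), which serves as a consistency check. The new content is the $\varepsilon^2$ identity; here I would use the symmetry of $\tfrac{B_l(x)B_{n-l}(x)}{l(n-l)}$ under $l\leftrightarrow n-l$ to replace $\sum_l\bigl(H_{l-1}+H_{n-l-1}-2H_{n-1}\bigr)$ by $-2\sum_l\bigl(H_{n-1}-H_{l-1}\bigr)$, thereby producing the convolution on the left-hand side of \eqref{2.16}. Collecting the $B_n(x)$ terms and simplifying $2h^2-3e=\tfrac12\bigl(H_{n-1}^2+3H_{n-1}^{(2)}\bigr)$ yields its coefficient, while the $B_{n-1}(x)$ and $\tfrac{B_l}{l}B_{n-l}(x)$ contributions assemble the remaining terms.

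I expect the main obstacle to be the second-order bookkeeping, and in particular the coefficient of $B_n(x)$: it comes from the ratio $(\varepsilon)_n/(2\varepsilon)_n$, where numerator and denominator carry the same harmonic numbers but scaled differently (by $2\varepsilon$ in the denominator), so the combination producing $\tfrac12\bigl(H_{n-1}^2+3H_{n-1}^{(2)}\bigr)$ rather than a pure square is the delicate point. A secondary care is to verify that the $\varepsilon^1$ layer indeed coincides with \eqref{2.12}, so that the genuinely new relation is isolated cleanly at order $\varepsilon^2$; a numerical check at small $n$ is advisable to fix the overall normalization.
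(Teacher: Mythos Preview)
Your proposal is correct and follows essentially the same route as the paper: set $a=b=\varepsilon$ in \eqref{2.5}, expand each Pochhammer ratio to second order in $\varepsilon$ (the paper divides through by $\varepsilon$ first, so its ``constant'' and ``$\varepsilon$'' coefficients correspond to your $\varepsilon^1$ and $\varepsilon^2$ layers), and then exploit the $l\leftrightarrow n-l$ symmetry in the left-hand convolution to convert $H_{l-1}+H_{n-l-1}-2H_{n-1}$ into $-2(H_{n-1}-H_{l-1})$. The paper likewise notes that the lower-order coefficient reproduces the polynomial Miki identity \eqref{2.12}, with \eqref{2.16} emerging at the next order, and its derivation of the $B_n(x)$ coefficient via $2(\varepsilon)_n/(2\varepsilon)_n$ matches your computation of $2h^2-3e=\tfrac12(H_{n-1}^2+3H_{n-1}^{(2)})$.
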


\begin{proof}
Setting $a=b=\varepsilon$ in \eqref{2.5} and dividing both sides by 
$\varepsilon$, we have to expand the following terms. First, for 
$1\leq l\leq n-1$ we have 
\begin{align}
\frac{1}{\varepsilon}\frac{(a)_{l}(b)_{n-l}}{(a+b)_{n}}
&=\frac{(\varepsilon+1)\dots(\varepsilon+l-1)(\varepsilon+1)\dots(\varepsilon+n-l-1)}{2(2\varepsilon+1)\dots(2\varepsilon+n-1)}\label{2.17}\\
&=\frac{(l-1)!(n-l-1)!}{2(n-1)!}
\prod_{j=1}^{l-1}\left(1+\frac{\varepsilon}{j}\right)
\prod_{j=1}^{n-l-1}\left(1+\frac{\varepsilon}{j}\right)
\prod_{j=1}^{n-1}\left(1+\frac{2\varepsilon}{j}\right)^{-1} \nonumber\\
&=\frac{(l-1)!(n-l-1)!}{2(n-1)!}\left(1+\left(H_{l-1}+H_{n-l-1}-2H_{n-1}\right)\varepsilon+O(\varepsilon^2)\right).\nonumber
\end{align}
Next, for $l\geq 1$ we get
\begin{align}
\frac{1}{\varepsilon}\frac{a(b)_{l}+b(a)_{l}}{(a+b)_{l+1}}
&=\frac{(\varepsilon+1)\dots(\varepsilon+l-1)}{(2\varepsilon+1)\dots(2\varepsilon+l)}
=\frac{1}{l}\left(1+\frac{2\varepsilon}{l}\right)^{-1}
\prod_{j=1}^{l-1}\frac{1+\frac{\varepsilon}{j}}{1+\frac{2\varepsilon}{j}}\label{2.18}\\
&=\frac{1}{l}\left(1-\frac{2\varepsilon}{l}+\dots\right)
\prod_{j=1}^{l-1}\left(1-\frac{\varepsilon}{j}+\dots\right) \nonumber \\
&=\frac{1}{l}\left(1-\left(H_l+\frac{1}{l}\right)\varepsilon+O(\varepsilon^2)\right),\nonumber
\end{align}
where we have used the fact that $H_{l-1}+2/l=H_l+1/l$. Next, we have
\begin{equation}\label{2.19}
\frac{1}{\varepsilon}\frac{ab}{(a+b+1)(a+b)}=\frac{1}{2(1+2\varepsilon)}
=\frac{1}{2}\left(1-2\varepsilon+O(\varepsilon^2)\right).
\end{equation}
Finally, we collect on the right the terms left out in the two sums, namely
\[
\frac{1}{\varepsilon}\left(1-2\frac{(a)_n}{(a+b)_n}\right)B_n(x).
\]
We expand
\begin{align*}
2\frac{(a)_n}{(a+b)_n}&=\frac{2(\varepsilon)_n}{(2\varepsilon)_n}
=\prod_{j=1}^{n-1}\frac{\varepsilon+j}{2\varepsilon+j} \\
&=\prod_{j=1}^{n-1}\left(1+\frac{\varepsilon}{j}\right)\left(1+\frac{2\varepsilon}{j}\right)^{-1}
=\prod_{j=1}^{n-1}\left(1-\frac{\varepsilon}{j}+\frac{2\varepsilon^2}{j^2}+\dots\right) \\
&=1-H_{n-1}\varepsilon+\left(2H_{n-1}^{(2)}+\sum_{1\leq j<k\leq n-1}\frac{1}{jk}\right)\varepsilon^2+O(\varepsilon^3).
\end{align*}
Now the double sum in the last term can clearly be written as 
$(H_{n-1}^2-H_{n-1}^{(2)})/2$, and thus
\begin{equation}\label{2.20}
\frac{1}{\varepsilon}\left(1-2\frac{(a)_n}{(a+b)_n}\right)
=H_{n-1}-\left(\frac{3}{2}H_{n-1}^{(2)}+\frac{1}{2}H_{n-1}^2\right)\varepsilon+O(\varepsilon^2).
\end{equation}
If we substitute \eqref{2.17}--\eqref{2.20} into \eqref{2.5} and let 
$\varepsilon\rightarrow 0$, we recover the polynomial analogue of Miki's 
identity. Finally, if we equate the coefficients of $\varepsilon$, we 
immediately get \eqref{2.16} after multiplying both sides by $-1$ and 
exploiting symmetry in \eqref{2.17}.
\end{proof}

\subsection{Consequences of Theorem~2}
We restrict our attention to the case $k=3$ and $a_1=a_2=a_3=\varepsilon$.
Furthermore, to avoid double indices, we set $i=l_1$, $j=l_2$, $l=l_3$ on the
left, and $i=l_0$, $j=l_1$, $l=l_2$ on the right of \eqref{4.0}. Then we get,
after dividing by $n!$, 
\begin{gather}
\sum_{i+j+l=n}\frac{(\varepsilon)_i(\varepsilon)_j(\varepsilon)_l}
{(3\varepsilon)_n}\frac{B_i(x)}{i!}\frac{B_j(x)}{j!}\frac{B_l(x)}{l!}
=3\sum_{i+j+l=n}\frac{\varepsilon(\varepsilon)_j(\varepsilon)_l}
{(3\varepsilon)_{j+l+1}}\frac{B_i(x)}{i!}\frac{B_j}{j!}\frac{B_l}{l!}\label{6.9}\\
+3\sum_{i+j=n-1}\frac{\varepsilon^2(\varepsilon)_j}{(3\varepsilon)_{j+2}}
\frac{B_i(x)}{i!}\frac{B_j}{j!}
+\frac{\varepsilon^3}{(3\varepsilon)_3}\frac{B_{n-2}(x)}{(n-2)!},\nonumber
\end{gather} 
valid for all $n\geq 2$ and $\varepsilon>0$. This will be the basis for the
various results in this subsection, and also immediately gives \eqref{4.0a}.

For a first easy consequence we let $\varepsilon\rightarrow\infty$ on both
sides of \eqref{6.9}. Then the limit of the four fractions involving 
$\varepsilon$ are easily seen to be $3^{-n}$, $3^{-j-l-1}$, $3^{-j-2}$, and
$3^{-3}$, respectively. Thus, after multiplying both sides by $3^nn!$, we
immediately get the following result.

\begin{corollary}
For integers $n\geq 2$ we have
\begin{gather}
\sum_{i+j+l=n}\binom{n}{i,j,l}B_i(x)B_j(x)B_l(x)
=\sum_{i+j+l=n}\binom{n}{i,j,l}3^iB_i(x)B_jB_l\label{6.10} \\
+n\sum_{i=0}^{n-1}\binom{n-1}{i}3^iB_i(x)B_{n-1-i}+n(n-1)3^{n-3}B_{n-2}(x).\nonumber
\end{gather}
\end{corollary}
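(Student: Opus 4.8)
The plan is to obtain Corollary~8 directly as a limiting case of the master identity \eqref{6.9}, exactly as the surrounding text suggests, rather than re-deriving anything from scratch. The starting point \eqref{6.9} is already available to me (it is the specialization $k=3$, $a_1=a_2=a_3=\varepsilon$ of Theorem~2 after dividing by $n!$), so the entire task reduces to evaluating the limit $\varepsilon\to\infty$ of each of the four $\varepsilon$-dependent fractions appearing there. First I would record these four limits. Writing each Pochhammer quotient as a ratio of products $\prod(\varepsilon+r)$ and tracking only the leading power of $\varepsilon$ in numerator and denominator, the quotient $(\varepsilon)_i(\varepsilon)_j(\varepsilon)_l/(3\varepsilon)_n$ has numerator of degree $i+j+l=n$ with leading coefficient $1$ and denominator of degree $n$ with leading coefficient $3^n$, so it tends to $3^{-n}$; similarly $\varepsilon(\varepsilon)_j(\varepsilon)_l/(3\varepsilon)_{j+l+1}\to 3^{-(j+l+1)}$, the quotient $\varepsilon^2(\varepsilon)_j/(3\varepsilon)_{j+2}\to 3^{-(j+2)}$, and $\varepsilon^3/(3\varepsilon)_3\to 3^{-3}$.

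Next I would substitute these four limits into \eqref{6.9}. Because the summand index sets are finite and each term converges, the limit passes through the (finite) sums termwise with no analytic subtlety. This yields
\begin{gather*}
\sum_{i+j+l=n}3^{-n}\frac{B_i(x)}{i!}\frac{B_j(x)}{j!}\frac{B_l(x)}{l!}
=3\sum_{i+j+l=n}3^{-(j+l+1)}\frac{B_i(x)}{i!}\frac{B_j}{j!}\frac{B_l}{l!}\\
+3\sum_{i+j=n-1}3^{-(j+2)}\frac{B_i(x)}{i!}\frac{B_j}{j!}
+3^{-3}\frac{B_{n-2}(x)}{(n-2)!}.
\end{gather*}
Then I would clear denominators by multiplying both sides by $3^n\,n!$. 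On the left this produces $\sum 3^n\cdot n!/(i!\,j!\,l!)=\binom{n}{i,j,l}$ times the product of polynomials, giving the left side of \eqref{6.10}. On the first right-hand sum, the factor $3^n\cdot 3^{-(j+l+1)}\cdot 3 = 3^{n-j-l}=3^i$ (using $i+j+l=n$), and $n!/(i!\,j!\,l!)=\binom{n}{i,j,l}$, producing $\sum\binom{n}{i,j,l}3^i B_i(x)B_jB_l$. For the second sum, with $i+j=n-1$ one has $3^n\cdot 3^{-(j+2)}\cdot 3=3^{n-1-j}=3^i$, and $n!/(i!\,j!)=n\cdot(n-1)!/(i!\,j!)=n\binom{n-1}{i}$ (since $i+j=n-1$), giving $n\sum_i\binom{n-1}{i}3^iB_i(x)B_{n-1-i}$. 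The last term becomes $3^n\cdot 3^{-3}\cdot n!/(n-2)!=3^{n-3}n(n-1)B_{n-2}(x)$. Collecting these is precisely \eqref{6.10}.

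The computation is entirely routine; the only points requiring a word of care are the bookkeeping of exponents of $3$ (I would double-check each via the constraint on the summation indices so that the factors $3^{-(j+l+1)}$, $3^{-(j+2)}$ etc.\ combine correctly with $3^n$ to leave the asserted powers $3^i$), and the re-expression of the multinomial coefficients after multiplying through by $n!$. I do not anticipate any genuine obstacle: since all sums are finite, interchanging the limit with summation is automatic, and no convergence or uniformity argument is needed. The main thing to get right is simply not to miscount a power of three.
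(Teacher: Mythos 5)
Your proposal is correct and follows exactly the paper's own route: letting $\varepsilon\to\infty$ in \eqref{6.9}, identifying the four limits $3^{-n}$, $3^{-j-l-1}$, $3^{-j-2}$, $3^{-3}$, and multiplying through by $3^n\,n!$. The paper states this in two sentences and leaves the bookkeeping implicit; your write-up simply supplies those details (the multinomial recombination and the powers of $3$), all of which check out.
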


For the next consequence of \eqref{6.9} we set $x=0$, for greater simplicity
of the statement. The proof is tedious, and we leave the details to the 
interested reader. 

\begin{corollary}
For integers $n\geq 2$ we have
\begin{align*}
&\frac{1}{3}\sum_{\substack{i+j+l=n\\i,j,l\geq 1}}
\frac{B_i}{i}\frac{B_j}{j}\frac{B_l}{l}
=\sum_{\substack{i+j+l=n\\i,j,l\geq 1}}\binom{n-1}{i-1}
\frac{B_i}{i}\frac{B_j}{j}\frac{B_l}{l} 
+\sum_{l=1}^{n-2}\binom{n-1}{l+1}\frac{B_l}{l}\frac{B_{n-l-1}}{n-l-1}\\
&+\sum_{l=1}^{n-1}\left(3H_{n-1}-2H_{l-1}+\tfrac{1}{n}\right)\frac{B_l}{l}\frac{B_{n-l}}{n-l}
-2\sum_{l=1}^{n-1}\binom{n-1}{l-1}\left(2H_l+\tfrac{1}{l}\right)\frac{B_l}{l}\frac{B_{n-l}}{n-l} \\
&+\frac{n-1}{6}B_{n-2}+\left(\frac{1}{(n-1)n}-3\right)B_{n-1} 
-2\left(\frac{2}{n}H_{n-1}+H_{n-1}^2+2H_{n-1}^{(2)}+\frac{3}{n^2}\right)\frac{B_n}{n}.
\end{align*}
\end{corollary}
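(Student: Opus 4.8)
The plan is to treat \eqref{6.9} at $x=0$ as an identity between power series in $\varepsilon$ and to read off the coefficient of $\varepsilon^{2}$. Since \eqref{6.9} holds for every $\varepsilon>0$, each Taylor coefficient in $\varepsilon$ is automatically a valid identity, so no consistency needs to be checked: the coefficient of $\varepsilon^{0}$ is the trivial equality $B_n/n!=B_n/n!$ produced by the terms in which two summation indices vanish, while the coefficient of $\varepsilon^{2}$, after multiplication by $(n-1)!$, is exactly the asserted formula. Two quick checks fix this normalization: the generic term on the left, with $i,j,l\ge 1$, satisfies $\frac{(\varepsilon)_i(\varepsilon)_j(\varepsilon)_l}{(3\varepsilon)_n}=\varepsilon^{2}\,\frac{(i-1)!(j-1)!(l-1)!}{3(n-1)!}\bigl(1+O(\varepsilon)\bigr)$, which after the weight $\frac{B_i}{i!}\frac{B_j}{j!}\frac{B_l}{l!}$ and multiplication by $(n-1)!$ becomes $\tfrac13\,\frac{B_i}{i}\frac{B_j}{j}\frac{B_l}{l}$; and the last term of \eqref{6.9} equals $\frac{\varepsilon^{2}}{6}\frac{B_{n-2}}{(n-2)!}\bigl(1+O(\varepsilon)\bigr)$, giving $\frac{n-1}{6}B_{n-2}$.

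The core of the argument is a bookkeeping of every term on both sides according to how many of its summation indices are zero, since this governs the leading power of $\varepsilon$: a factor with no vanishing index is $O(\varepsilon^{2})$, with one vanishing index $O(\varepsilon)$, and with two vanishing indices $O(1)$ (there is no three-zero term, as $n\ge 2$). To capture the full coefficient of $\varepsilon^{2}$ I would therefore expand the all-nonzero fractions only to leading order --- these contribute the two harmonic-number-free triple sums, the binomially weighted one arising from the first sum on the right via the elementary identity $\binom{n-1}{i-1}\frac{B_i}{i}=\tfrac1n\binom ni B_i$ --- the one-zero fractions to first order, and the two-zero fractions to second order. The required expansions are entirely analogous to \eqref{2.17}--\eqref{2.20}: a one-zero factor such as $\frac{(\varepsilon)_j(\varepsilon)_l}{(3\varepsilon)_n}$ carries a first-order weight $H_{j-1}+H_{l-1}-3H_{n-1}$, producing the harmonic weights attached to the double sums, while a two-zero factor $\frac{(\varepsilon)_n}{(3\varepsilon)_n}=\tfrac13\prod_{m=1}^{n-1}\frac{1+\varepsilon/m}{1+3\varepsilon/m}$ must be carried to $O(\varepsilon^{2})$, producing, just as in \eqref{2.20}, the $H_{n-1}^{2}$ and $H_{n-1}^{(2)}$ that decorate the $B_n$-term.

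Once all four fractions on the two sides are expanded to the appropriate order, I would collect the coefficient of $\varepsilon^{2}$, multiply through by $(n-1)!$, and simplify. The simplifications use the symmetry of the triple sum in $i,j,l$, the relation $\sum_{1\le p<q\le m}\frac1{pq}=\tfrac12\bigl(H_m^{2}-H_m^{(2)}\bigr)$ to convert the double sums arising from squaring $\prod(1+\varepsilon/m)$ into $H^2$ and $H^{(2)}$, and routine reindexings together with $H_{l-1}=H_l-\tfrac1l$ to match the stated summation ranges.

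The main obstacle is purely organizational rather than conceptual: one must correctly apportion each of the four groups of terms (the triple sum on the left and the three groups on the right) among the three expansion orders, and then verify that the one-zero contributions from left and right combine into the single harmonic weights displayed for the double sums, and that the two-zero contributions assemble into the precise $B_n$ coefficient $-2\bigl(\tfrac2n H_{n-1}+H_{n-1}^2+2H_{n-1}^{(2)}+\tfrac3{n^2}\bigr)\tfrac{B_n}{n}$. Because $B_1=-\tfrac12$ does not vanish, the index-one terms survive and sit inside the generic $i,j,l\ge 1$ sums, so no special treatment is needed beyond honoring the ranges $i,j,l\ge 1$; the delicate part is simply keeping the second-order $H^2$ and $H^{(2)}$ bookkeeping exact.
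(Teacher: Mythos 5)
Your proposal is correct and is essentially the paper's own method: expand \eqref{6.9} at $x=0$ in powers of $\varepsilon$, sort every term by how many of its summation indices vanish (none $\to O(\varepsilon^{2})$, one $\to O(\varepsilon)$, two $\to O(1)$), expand the rational functions of $\varepsilon$ exactly as in \eqref{2.17}--\eqref{2.20}, and read off a Taylor coefficient, which is legitimate term by term since both sides are rational in $\varepsilon$ and agree for all $\varepsilon>0$. One point, in your favor, deserves mention: your placement of the corollary at the coefficient of $\varepsilon^{2}$ (with Miki's identity \eqref{1.3} sitting at order $\varepsilon^{1}$, just above the trivial order-$\varepsilon^{0}$ equality) is the correct bookkeeping, whereas the paper's outline read literally (``divide by $\varepsilon^{2}$\,\dots\,equate the coefficients of $\varepsilon$'') would place it at order $\varepsilon^{3}$; that cannot be right, because at order $\varepsilon^{3}$ the triple sums acquire harmonic-number weights $H_{i-1}+H_{j-1}+H_{l-1}-3H_{n-1}$ and the corner factor $(\varepsilon)_n/(3\varepsilon)_n$ contributes cubic expressions $H_{n-1}^{3}$, $H_{n-1}H_{n-1}^{(2)}$, $H_{n-1}^{(3)}$, none of which appear in the stated identity, while at order $\varepsilon^{2}$ the unweighted triple sums, singly-weighted double sums, and the $H_{n-1}^{2}$, $H_{n-1}^{(2)}$ coefficients of $B_n$ match exactly --- consistent with the proven $k=2$ pattern in the proof of Corollary~7, where division is by $\varepsilon$, the constant term gives the Miki analogue \eqref{2.12}, and the $\varepsilon$-coefficient gives \eqref{2.16}.
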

This can be seen as a third-order analogue of Miki's identity.
Note the difference in complexity between this result and the third-order
analogue of Matiyasevich's identity given in \eqref{4.0a}. See also 
\cite[(6.5)]{AD} for a third-order ``Miki analogue" for Bernoulli polynomials.
To prove Corollary 9, one can use a similar method as in the proof of 
Corollary~7, and proceed as follows:

-- Collect the ``edge" and ``corner" terms in the sums of \eqref{6.9}.

-- Divide both sides of \eqref{6.9} by $\varepsilon^2$.

-- Expand the various fractions involving $\varepsilon$ in a similar
way as in \eqref{2.17} and \eqref{2.18}.

-- Equate the constant terms (i.e., let $\varepsilon\rightarrow 0$) to
obtain a first identity.

-- Equate the coefficients of $\varepsilon$ to obtain a second identity.

\noindent
Interestingly, in this case the first identity turns out to be equivalent to
\eqref{1.3}, Miki's original identity. The second one is Corollary 9.

\subsection{Consequences of Theorems~3 and~4}

Given the similarities between Theorems~1 and~3, it is clear that Euler 
analogues of Corollaries~2--7 could easily be derived; recall that an analogue
of Corollary~1 is already stated following Theorem~3. Here we restrict 
ourselves to only a few more consequences; we also skip the proofs which are
again similar to the proof of Corollary~7.

\begin{corollary}
For integers $n\geq 2$ we have
\begin{align*}
\sum_{l=1}^{n-2}\frac{E_l(x)}{l}\frac{E_{n-l-1}(x)}{n-l-1}
&=4\sum_{l=1}^{n-1}\binom{n-2}{l-1}H_{l-1}\frac{B_{n-l}(x)}{n-l}\frac{E_l(0)}{l}\\
&\qquad\qquad+2H_{n-2}\frac{E_{n-1}(x)}{n-1}+4\frac{H_{n-1}}{n-1}\frac{E_n(0)}{n},
\end{align*}
and for $n\geq 1$,
\begin{align*}
&\sum_{l=1}^{n-1}\left(H_{n-1}-H_{l-1}\right)
\frac{E_l(x)}{l}\frac{E_{n-l}(x)}{n-l} 
=\frac{1}{2}\left(H_{n-1}^2+3H_{n-1}^{(2)}\right)\frac{E_n(x)}{n} \\
&\quad+\sum_{l=1}^n\binom{n-1}{l-1}\left(H_{l-1}^2+3H_{l-1}^{(2)}\right)
\frac{B_{n+1-l}(x)}{n+1-l}\frac{E_l(0)}{l}
+\frac{1}{n}\left(H_n^2+3H_n^{(2)}\right)\frac{E_{n+1}(0)}{n+1}.
\end{align*}
\end{corollary}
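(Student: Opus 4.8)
The plan is to specialize Theorem~3 to $a=b=\varepsilon$ and then expand both sides as power series in $\varepsilon$ about $\varepsilon=0$, equating coefficients of successive powers of $\varepsilon$, exactly as in the proof of Corollary~7. With $a=b=\varepsilon$ the Pochhammer factors in \eqref{5.6a} simplify, since $(a)_l+(b)_l=2(\varepsilon)_l$ and $(a+b)_l=(2\varepsilon)_l$; moreover the $l=0$ term of the right-hand sum contributes $-\tfrac{4}{n+1}B_{n+1}(x)$, which exactly cancels the leading term $\tfrac{4}{n+1}B_{n+1}(x)$. After also moving the two ``edge'' terms $l=0$ and $l=n$ of the left-hand convolution (each equal to $\tfrac{(\varepsilon)_n}{(2\varepsilon)_n}E_n(x)$) to the right, I obtain a cleaner identity whose left-hand side is the genuine convolution $\sum_{l=1}^{n-1}\binom{n}{l}\tfrac{(\varepsilon)_l(\varepsilon)_{n-l}}{(2\varepsilon)_n}E_l(x)E_{n-l}(x)$ and whose right-hand side consists of $-2\tfrac{(\varepsilon)_n}{(2\varepsilon)_n}E_n(x)$ together with $-\tfrac{4}{n+1}\sum_{l=1}^{n+1}\binom{n+1}{l}\tfrac{(\varepsilon)_l}{(2\varepsilon)_l}E_l(0)B_{n+1-l}(x)$.

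The key computation is the expansion of the Pochhammer ratios. Using the same product-and-logarithm technique as in \eqref{2.17}--\eqref{2.20}, one finds for every $m\geq 1$ that $\tfrac{(\varepsilon)_m}{(2\varepsilon)_m}=\tfrac12\bigl[1-H_{m-1}\varepsilon+\tfrac12(H_{m-1}^2+3H_{m-1}^{(2)})\varepsilon^2+O(\varepsilon^3)\bigr]$, so precisely the combination $H^2+3H^{(2)}$ from Corollary~7 reappears; the left-hand ratio $\tfrac{(\varepsilon)_l(\varepsilon)_{n-l}}{(2\varepsilon)_n}$ is of order $\varepsilon$ and its expansion is the one already recorded in \eqref{2.17}, with leading coefficient simplifying via $\binom{n}{l}\tfrac{(l-1)!(n-l-1)!}{2(n-1)!}=\tfrac{n}{2l(n-l)}$.

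Equating coefficients of $\varepsilon^0,\varepsilon^1,\varepsilon^2$ then yields three identities. The $\varepsilon^0$ coefficient gives the known relation $E_n(x)=-\tfrac{2}{n+1}\sum_{l=1}^{n+1}\binom{n+1}{l}E_l(0)B_{n+1-l}(x)$, which serves as a consistency check. The $\varepsilon^1$ coefficient, after multiplication by $\tfrac{2}{n}$ and the replacement $n\mapsto n-1$, and after peeling off the $l=n$ term of the resulting sum and using $\tfrac{1}{(n-1)n}\binom{n}{l}=\binom{n-2}{l-1}\tfrac{1}{l(n-l)}$, produces the first identity of the corollary, the Euler analogue of the polynomial Miki identity. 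The $\varepsilon^2$ coefficient, after division by $-n$, symmetrization of the left-hand coefficient $H_{l-1}+H_{n-l-1}-2H_{n-1}$ into the stated asymmetric form $H_{n-1}-H_{l-1}$ (permissible since $\tfrac{E_l(x)}{l}\tfrac{E_{n-l}(x)}{n-l}$ is symmetric in $l\leftrightarrow n-l$), and the analogous simplification $\tfrac{1}{n(n+1)}\binom{n+1}{l}=\binom{n-1}{l-1}\tfrac{1}{l(n+1-l)}$, produces the second identity.

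The main obstacle is the careful second-order bookkeeping, compounded by the fact that---unlike the symmetric identity of Theorem~1---the right-hand side of Theorem~3 carries the degree shift inherent in $\tfrac{4}{n+1}B_{n+1}(x)$ and in the factors $B_{n+1-l}(x)$. One must verify that the $l=0$ edge term cancels the leading $B_{n+1}$ contribution at every order, track which sums run to $n-1$ versus $n+1$, and apply the index shift $n\mapsto n-1$ needed for the first identity but not the second. Everything else is already supplied by Theorem~3 and the expansion methods of Section~6.1.
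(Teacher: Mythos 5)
Your proposal is correct and is precisely the argument the paper intends: the paper omits the proof of this corollary, remarking only that it is ``similar to the proof of Corollary~7,'' and your proof carries out exactly that program---specializing Theorem~3 to $a=b=\varepsilon$, expanding the Pochhammer ratios as in \eqref{2.17}--\eqref{2.20}, and equating coefficients of $\varepsilon^{0}$, $\varepsilon^{1}$, $\varepsilon^{2}$. Your bookkeeping checks out, including the cancellation of the $B_{n+1}(x)$ term against the $l=0$ edge term, the binomial simplifications $\tfrac{1}{(n-1)n}\binom{n}{l}=\binom{n-2}{l-1}\tfrac{1}{l(n-l)}$ and $\tfrac{1}{n(n+1)}\binom{n+1}{l}=\binom{n-1}{l-1}\tfrac{1}{l(n+1-l)}$, and the symmetrization yielding the coefficient $H_{n-1}-H_{l-1}$.
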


Finally, to obtain third-order analogues of Miki's identity, we start with
\eqref{5.26} for $k=3$ and follow the outline described after Corollary~9.
This leads to the following identities.

\begin{corollary}
For integers $n\geq 2$ we have
\begin{align*}
\sum_{l=1}^{n-1}&\left(\frac{E_l(x)}{l}\frac{E_{n-l}(x)}{n-l}
-\frac{E_l(0)}{l}\frac{E_{n-l}(0)}{n-l}\right) \\
&\qquad\qquad=\sum_{\substack{i+j+l=n\\i,j,l\geq 1}}\binom{n-1}{i}
E_i(x)\frac{E_j(0)}{j}\frac{E_l(0)}{l}+2H_{n-1}\frac{E_n(x)}{n},
\end{align*}
and 
\begin{align*}
\frac{1}{3}&\sum_{\substack{i+j+l=n\\i,j,l\geq 1}}
\frac{E_i(x)}{i}\frac{E_j(x)}{j}\frac{E_l(x)}{l} 
=-2\left(H_{n-1}^2+2H_{n-1}^{(2)}\right)\frac{E_n(x)}{n} \\
&+\sum_{\substack{i+j+l=n\\i,j,l\geq 1}}\binom{n-1}{i}
\left(H_{j-1}+H_{l-1}-3H_{j+l-1}\right)E_i(x)\frac{E_j(0)}{j}\frac{E_l(0)}{l}\\
&+\sum_{l=1}^{n-1}\left(3H_{n-1}-H_{l-1}-H_{n-l-1}\right)
\left(\frac{E_l(x)}{l}\frac{E_{n-l}(x)}{n-l}
-\frac{E_l(0)}{l}\frac{E_{n-l}(0)}{n-l}\right).
\end{align*}
\end{corollary}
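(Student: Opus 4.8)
The plan is to imitate the derivation of Corollary~9, now starting from the odd-order Euler identity \eqref{5.26} with $k=3$ and $a_1=a_2=a_3=\varepsilon$. Since the three parameters are equal, each inner sum $\sum_{|J|=j}$ collapses to the number $\binom{3}{j}$ of $j$-subsets of $\{1,2,3\}$ times a single representative, so the right-hand side becomes $3S_1-6S_2+4E_n(x)$, where the factors $(-2)^{j-1}=1,-2,4$ have been combined with the subset counts $3,3,1$, and (using $n-l_0=l_1+l_2$ and $n-l_0=l_1$, respectively)
\[
S_1=\sum_{l_0+l_1+l_2=n}\binom{n}{l_0,l_1,l_2}\frac{(\varepsilon)_{l_1}(\varepsilon)_{l_2}}{(3\varepsilon)_{l_1+l_2}}E_{l_0}(x)E_{l_1}(0)E_{l_2}(0),
\]
\[
S_2=\sum_{l_0+l_1=n}\binom{n}{l_0}\frac{(\varepsilon)_{l_1}}{(3\varepsilon)_{l_1}}E_{l_0}(x)E_{l_1}(0).
\]
This yields a relation in $\varepsilon$, valid for every $\varepsilon>0$, entirely analogous to \eqref{6.9}.

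First I would classify every summand by how many of its indices vanish: a \emph{corner} term has two of them $0$ (so one factor is $E_n$ and the rest are $E_0=1$), an \emph{edge} term has exactly one $0$, and an \emph{interior} term has none. Because $(\varepsilon)_m=\varepsilon(m-1)!\bigl(1+H_{m-1}\varepsilon+O(\varepsilon^2)\bigr)$ for $m\ge1$ while $(\varepsilon)_0=1$, and likewise for $(3\varepsilon)_m$, every Pochhammer ratio is analytic at $\varepsilon=0$: corner terms are $O(1)$, edge terms are $O(\varepsilon)$, and interior terms are $O(\varepsilon^2)$. Hence both sides are analytic in $\varepsilon$, and I would expand each ratio as a power series exactly as in \eqref{2.17}, \eqref{2.18} and \eqref{2.20}, keeping terms through $\varepsilon^2$. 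The $\varepsilon^0$ coefficients agree trivially, both reducing to $E_n(x)$ once the corner contributions cancel the constant parts of $S_1$ and $S_2$.

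The two identities then come from the next two orders. Matching the coefficient of $\varepsilon$ gives the first identity, and matching the coefficient of $\varepsilon^2$ gives the second; equivalently, following the recipe after Corollary~9 but dividing the relation $(\mathrm{LHS}-\mathrm{RHS})$ by $\varepsilon$ rather than $\varepsilon^2$ (since only the constant order cancels here), these are its constant and linear coefficients. The overall weights $\tfrac1{\text{index}}$ and the factor $\tfrac1n$ seen throughout the corollary arise naturally from the leading coefficients $\varepsilon(m-1)!$ of the Pochhammer symbols, just as the factorial weights appear in \eqref{6.9}. At the $\varepsilon^2$ level the interior terms of the left-hand side supply $\tfrac13\sum_{i+j+l=n}\tfrac{E_i(x)}{i}\tfrac{E_j(x)}{j}\tfrac{E_l(x)}{l}$, the edge terms of $S_1$ together with the interior terms of $S_2$ produce the $\binom{n-1}{i}E_i(x)\tfrac{E_j(0)}{j}\tfrac{E_l(0)}{l}$ sums, the edge terms of the left-hand side give the double convolutions $\tfrac{E_l(x)E_{n-l}(x)}{l(n-l)}$, and the corner terms contribute the $\tfrac{E_n(x)}{n}$ terms carrying the harmonic weights. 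Throughout I would use $H_{l-1}+\tfrac1l=H_l$ and $\sum_{1\le j<k\le m}\tfrac1{jk}=\tfrac12\bigl(H_m^2-H_m^{(2)}\bigr)$, exactly as in the proof of Corollary~7.

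I expect the main obstacle to be the $\varepsilon^2$ (second) identity. Unlike the Matiyasevich-type analogue \eqref{4.0a}, here triple, double and single products occur simultaneously, each carrying first- and second-order harmonic weights $H_{m-1}$, $H_{m-1}^2$ and $H_{m-1}^{(2)}$, and recombining the many symmetric sums correctly is long and error-prone. A second delicate point is casting the result into the stated \emph{difference} form, in which $\tfrac{E_l(0)E_{n-l}(0)}{l(n-l)}$ is subtracted on the left: because \eqref{5.26} carries the variable $x$ in only one factor per product while the left-hand side of the starting identity carries $x$ in all three factors, this asymmetry must be reconciled. I would do so by also running the whole computation at $x=0$ and subtracting it from the general-$x$ version, together with elementary relations among the $E_i(0)$ (and, if needed, the reflection formula following from \eqref{5.4}); this is precisely what converts the raw coefficient identities into the compact forms stated in the corollary.
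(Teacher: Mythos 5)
Your strategy coincides with the paper's own (sketched) proof: set $k=3$ and $a_1=a_2=a_3=\varepsilon$ in \eqref{5.26}, collapse the sums over $J$ (your reduction of the right-hand side to $3S_1-6S_2+4E_n(x)$ is correct), expand in powers of $\varepsilon$, and equate the coefficients of $\varepsilon$ and of $\varepsilon^2$; your observation that one should divide by $\varepsilon$ rather than $\varepsilon^2$, because only the constant order cancels here, is also the right adaptation of the recipe following Corollary~9.

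Two of your bookkeeping claims are false, however, and would misdirect the computation if followed literally. First, the classification ``corner $O(1)$, edge $O(\varepsilon)$, interior $O(\varepsilon^2)$'' is valid only for the left-hand side. In $S_1$ the index $l_0$ carries no Pochhammer factor, while the denominator $(3\varepsilon)_{l_1+l_2}$ itself vanishes at $\varepsilon=0$ as soon as $l_1+l_2\geq 1$; hence the order of a term of $S_1$ is governed solely by how many of $l_1,l_2$ are positive. In particular, the interior terms of $S_1$ are only $O(\varepsilon)$, its terms with $l_1=0$ or $l_2=0$ are $O(1)$, and every term of $S_2$ is $O(1)$. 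Second, the triple-product sums at the $\varepsilon^2$ level cannot come from ``the interior terms of $S_2$'': $S_2$ contains only products of two Euler values. What actually happens---and what makes the computation manageable---is that the degenerate part of $S_1$ (terms with $l_1=0$ or $l_2=0$) cancels identically against $-6S_2+4E_n(x)$, leaving $3S_1-6S_2+4E_n(x)=E_n(x)+3T$ exactly, for every $\varepsilon$, where $T$ is the part of $S_1$ with $l_1,l_2\geq 1$. All harmonic-weight triple sums then arise from the $\varepsilon^2$ coefficient of $3T$ via $\frac{(\varepsilon)_j(\varepsilon)_l}{(3\varepsilon)_{j+l}}=\frac{\varepsilon}{3}\,\frac{(j-1)!\,(l-1)!}{(j+l-1)!}\bigl(1+(H_{j-1}+H_{l-1}-3H_{j+l-1})\varepsilon+O(\varepsilon^2)\bigr)$. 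Finally, no separate run at $x=0$ is needed to obtain the difference form: the $l_0=0$ terms of $3T$ (where $E_0(x)=1$ and the multinomial coefficient collapses) contribute exactly $n\sum_{l}\frac{E_l(0)}{l}\frac{E_{n-l}(0)}{n-l}$ at order $\varepsilon$ and $n\sum_l (H_{l-1}+H_{n-l-1}-3H_{n-1})\frac{E_l(0)}{l}\frac{E_{n-l}(0)}{n-l}$ at order $\varepsilon^2$, so after dividing by $n$ and moving these to the left-hand side the stated differences appear automatically. With these corrections your plan goes through and reproduces both identities.
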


Using the special values \eqref{5.6}, numerous identities involving Genocchi
and/or Euler numbers could also be obtained.

\section{Final remarks}

{\bf 1.} By choosing different values of the parameters in Theorems~1--4,
many more identities for Bernoulli and Euler numbers and polynomials could be
obtained, some relatively simple, and others of increasing complexity.
We have shown in this paper that the original identities of Miki and 
Matiyasevich and their various extensions are special cases of very general
class of identities.

\medskip
{\bf 2.} This is not the first common extension of the identities of Miki and
Matiyasevich. In fact, using generating functions, Dunne and Schubert \cite{DS}
recently proved the following result.

\begin{theorem}[Dunne and Schubert]
For any integer $n\geq 2$ and real $p\geq 0$ we have
\begin{align}
\sum_{l=1}^{n-1}(2l)_p(2n-2l)_p&\frac{B_{2l}}{2l}\frac{B_{2n-2l}}{2n-2l}
= 2B_{2n}\frac{\Gamma(2n+2p)}{(2n)!}\sum_{l=1}^{2n-1}
\frac{\Gamma(p+l)\Gamma(p+1)}{\Gamma(2p+l+1)}\label{7.1} \\
&+\frac{2\Gamma(p+1)}{(2n)!}\sum_{l=1}^n\binom{2n}{2l}
\frac{\Gamma(p+2l)\Gamma(2p+2n)}{\Gamma(2p+2l+1)}B_{2l}B_{2n-2l}.\nonumber
\end{align}
\end{theorem}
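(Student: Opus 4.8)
The plan is to treat \eqref{7.1}, for each fixed $n$, as an identity between two functions of the single variable $p$, to establish it first at all nonnegative integers $p$, and then to extend it to real $p\ge 0$ by analytic continuation. The starting observation is that both sides are built entirely from ratios of Gamma functions of $p$ whose arguments stay in a right half-plane, so each side extends to an analytic function of $p$ on, say, $\operatorname{Re}p>-\tfrac12$: the left-hand side is a finite sum of products $\Gamma(2l+p)\Gamma(2n-2l+p)$, and each of the two sums on the right is a finite sum of quotients of Gamma functions with zero-free denominators. It therefore suffices to pin the identity down at infinitely many integer values of $p$ and to control the growth in $p$ well enough to invoke a Carlson-type uniqueness theorem.

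At integer $p$ the factor $(2l)_p=2l(2l+1)\cdots(2l+p-1)$ is a polynomial in $l$, so the left-hand side becomes a polynomial-weighted convolution $\sum_l w_p(l)\,B_{2l}B_{2n-2l}$, and the right-hand side likewise collapses to finite elementary sums. The case $p=0$ is exactly Miki's identity \eqref{1.3} (note that the inner sum $\sum_{l=1}^{2n-1}\Gamma(p+l)\Gamma(p+1)/\Gamma(2p+l+1)$ reduces to $H_{2n-1}$), and the case $p=1$ collapses, after the telescoping of $\sum 1/((l+1)(l+2))$ and the binomial rewriting $\binom{2n}{2l}/((2l+1)(2l+2))=\binom{2n+2}{2l+2}/((2n+1)(2n+2))$, to Matiyasevich's identity \eqref{1.4}; thus $p$ interpolates between the two identities this paper is concerned with. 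For general integer $p$ I would produce the required polynomial-weighted convolutions from the machinery of Section~2: the family \eqref{2.9} of Corollary~3, evaluated at suitable integer values of $a$, furnishes convolutions with polynomial weights in the summation index (for instance the weight $l^2+l+1$ already appears in \eqref{2.11} for $a=2$), and finite linear combinations of these, together with the plain recurrence for the Bernoulli numbers, should reproduce $w_p(l)$ for each fixed $p$.

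The remaining step is the passage from integers to real $p\ge 0$. After dividing both sides by $\Gamma(2n+2p)$, Stirling's formula shows that the $p^{2p}$-growth of the surviving Gamma factors cancels, leaving a function of controlled exponential type on the right half-plane; a Phragm\'en--Lindel\"of/Carlson argument then forces the two analytic continuations, which agree at every nonnegative integer, to agree identically. An alternative, closer to the route of Dunne and Schubert, is purely analytic: write the left-hand side as the coefficient of $z^{2n}$ in $f_p(z)^2$, where $f_p(z)=\sum_{l\ge 1}\frac{\Gamma(2l+p)}{(2l)!}B_{2l}z^{2l}$ is obtained from the series $\sum_{l\ge 1}\frac{B_{2l}}{2l}z^{2l}$ attached to the digamma function $\psi$ (the asymptotic expansion of $\log z-\tfrac1{2z}-\psi(z)$) by the Weyl-type operator $g\mapsto z\,D^p\!\left(z^{p-1}g\right)$ that implements the weight $(2l)_p$, and then to read off the two sums on the right as the corresponding convolution and boundary contributions.

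The main obstacle will be the real-$p$ bookkeeping rather than any single hard computation. In the integer-$p$ approach the delicate point is to show cleanly that the polynomial weight $w_p(l)$ on the left is matched, term by term, by the collapse of the two asymmetric Gamma-quotient sums on the right; in the analytic approach the delicate point is to make the manipulations of the divergent series $\sum \frac{B_{2l}}{2l}z^{2l}$ rigorous (via Borel summation, or by working throughout with the genuinely convergent $z^{-2l}$-form attached to $\psi$) and to carry the fractional operator $D^p$ correctly past the convolution. Either way the identity at integer $p$ is the anchor, and the analytic continuation in $p$ is what upgrades those finitely many elementary identities to the stated one-parameter family.
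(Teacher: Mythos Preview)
Your plan is far more elaborate than what is needed, and in its present form it has real gaps. The paper's derivation of \eqref{7.1} is essentially a one-line specialization of Theorem~\ref{thm:Thm1}: set $a=b=p$ and $x=0$ in \eqref{2.5}, replace $n$ by $2n$, strip out the boundary terms of the two sums (using that $B_{\text{odd}\geq 3}=0$), and one lands directly on \eqref{7.2}. Since Theorem~\ref{thm:Thm1} is already proved for \emph{real} $a,b>0$, this gives \eqref{7.2} for all real $p>0$ at once---no Carlson argument, no analytic continuation in $p$. What remains is to check that \eqref{7.1} and \eqref{7.2} agree, and that reduces to the elementary telescoping identity
\[
\sum_{l=1}^{2n-1}\frac{\Gamma(p+l)}{\Gamma(2p+l+1)}
=\frac{\Gamma(p)}{\Gamma(2p+1)}-\frac{\Gamma(p+2n)}{p\,\Gamma(2p+2n)},
\]
provable for instance from the beta-function integral.

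Your scheme, by contrast, would first establish \eqref{7.1} only at integer $p$ by assembling linear combinations of Corollary~3 at various integer values of $a$, and then upgrade to real $p$ via Carlson/Phragm\'en--Lindel\"of. Two concrete problems remain. First, the key claim that ``finite linear combinations of \eqref{2.9} at integer $a$ \dots\ should reproduce $w_p(l)$'' is asserted but not carried out; the weights in \eqref{2.9} have the shape $\binom{n}{l}(a)_l(n-l-1)!/(a)_n$, which is not a polynomial in $l$ of the right form, and matching $(2l)_p(2n-2l)_p$ this way would amount to re-deriving Theorem~\ref{thm:Thm1} the hard way. Second, the growth estimate after dividing by $\Gamma(2n+2p)$ is only sketched; one must actually verify the exponential-type bound on a half-plane before Carlson applies. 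Both difficulties evaporate once you notice that Theorem~\ref{thm:Thm1} already handles real parameters, and that is precisely the route the paper takes.
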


The case $p=0$ gives Miki's identity, while for $p=1$ we get
\[
\sum_{l=1}^nB_{2l}B_{2n-2l} = \frac{1}{n+1}
\sum_{l=1}^n\binom{2n+2}{2l+2}B_{2l}B_{2n-2l} + 2nB_{2n},
\]
which is equivalent to Matiyasevich's identity. 

The identity \eqref{7.1} actually follows from Theorem~1 if we take $a=b=p$ and
$x=0$, then replace $n$ by $2n$ and extract the end terms in the sums. After
some manipulations we then get
\begin{align}
\sum_{l=1}^{n-1}(2l)_p(2n-2l)_p&\frac{B_{2l}}{2l}\frac{B_{2n-2l}}{2n-2l}
= \frac{\Gamma^2(p)}{(2n)!}\left((2p)_{2n}-2(p)_{2n}\right) \label{7.2} \\
&+\frac{2\Gamma(p+1)}{(2n)!}\sum_{l=1}^n\binom{2n}{2l}
\frac{\Gamma(p+2l)\Gamma(2p+2n)}{\Gamma(2p+2l+1)}B_{2l}B_{2n-2l}.\nonumber
\end{align}
Comparing \eqref{7.1} with \eqref{7.2} shows that after some simplification
we have 
\[
\sum_{l=1}^{2n-1}\frac{\Gamma(p+l)}{\Gamma(2p+l+1)}
=\frac{\Gamma(p)}{\Gamma(2p+1)} - \frac{\Gamma(p+2n)}{p\Gamma(2p+2n)}.
\]
This last identity can be proved independently, for instance by manipulating
the integral representation of Euler's beta function.

\end{document}